\newtheorem{theorem}{Theorem}[section]
\newtheorem{lemma}{Lemma}[section]
\numberwithin{equation}{section}
\begin{document}
\newcommand{\ra}{\rightarrow}

\newcommand{\la}{\leftarrow}

\newcommand{\Lra}{\Leftrightarrow}

\newcommand{\Ra}{\Rightarrow}

\newcommand{\BQ}{\Bbb Q}

\newcommand{\BZ}{\Bbb Z}

\newcommand{\BC}{\Bbb C}

\newcommand{\BR}{\Bbb R}

\newcommand{\BN}{\Bbb N}

\newcommand{\df}{\displaystyle \frac}

\newcommand{\lang}{\langle}

\newcommand{\rang}{\rangle}

\newcommand{\ions}{\rm{\scriptstyle ions}}

\newcommand{\ion}{\rm{\scriptstyle ion}}

\newcommand{\dip}{\rm{\scriptstyle dip}}

\newcommand{\pol}{\rm{\scriptstyle pol}}

\newcommand{\mg}{\rm{\scriptstyle mag}}

\newcommand{\free}{\rm{\scriptstyle free}}

\newcommand{\be}{\begin{equation}}

\newcommand{\ee}{\end{equation}}

\newcommand{\bma}{\mbox{\boldmath{$a$}}}

\newcommand{\bmb}{\mbox{\boldmath{$b$}}}

\newcommand{\bmc}{\mbox{\boldmath{$c$}}}

\newcommand{\bmd}{\mbox{\boldmath{$d$}}}

\newcommand{\bme}{\mbox{\boldmath{$e$}}}

\newcommand{\bmf}{\mbox{\boldmath{$f$}}}

\newcommand{\bmg}{\mbox{\boldmath{$g$}}}

\newcommand{\bmh}{\mbox{\boldmath{$h$}}}

\newcommand{\bmi}{\mbox{\boldmath{$i$}}}

\newcommand{\bmj}{\mbox{\boldmath{$j$}}}

\newcommand{\bmk}{\mbox{\boldmath{$k$}}}

\newcommand{\bml}{\mbox{\boldmath{$l$}}}

\newcommand{\bmm}{\mbox{\boldmath{$m$}}}

\newcommand{\bmn}{\mbox{\boldmath{$n$}}}

\newcommand{\bmo}{\mbox{\boldmath{$o$}}}

\newcommand{\bmp}{\mbox{\boldmath{$p$}}}

\newcommand{\bmq}{\mbox{\boldmath{$q$}}}

\newcommand{\bmr}{\mbox{\boldmath{$r$}}}

\newcommand{\bms}{\mbox{\boldmath{$s$}}}

\newcommand{\bmt}{\mbox{\boldmath{$t$}}}

\newcommand{\bmu}{\mbox{\boldmath{$u$}}}

\newcommand{\bmv}{\mbox{\boldmath{$v$}}}

\newcommand{\bmw}{\mbox{\boldmath{$w$}}}

\newcommand{\bmx}{\mbox{\boldmath{$x$}}}

\newcommand{\bmy}{\mbox{\boldmath{$y$}}}

\newcommand{\bmz}{\mbox{\boldmath{$z$}}}

\newcommand{\bmA}{\mbox{\boldmath{$A$}}}

\newcommand{\bmB}{\mbox{\boldmath{$B$}}}

\newcommand{\bmC}{\mbox{\boldmath{$C$}}}

\newcommand{\bmD}{\mbox{\boldmath{$D$}}}

\newcommand{\bmE}{\mbox{\boldmath{$E$}}}

\newcommand{\bmF}{\mbox{\boldmath{$F$}}}

\newcommand{\bmG}{\mbox{\boldmath{$G$}}}

\newcommand{\bmH}{\mbox{\boldmath{$H$}}}

\newcommand{\bmI}{\mbox{\boldmath{$I$}}}

\newcommand{\bmJ}{\mbox{\boldmath{$J$}}}

\newcommand{\bmK}{\mbox{\boldmath{$K$}}}

\newcommand{\bmL}{\mbox{\boldmath{$L$}}}

\newcommand{\bmM}{\mbox{\boldmath{$M$}}}

\newcommand{\bmN}{\mbox{\boldmath{$N$}}}

\newcommand{\bmO}{\mbox{\boldmath{$O$}}}

\newcommand{\bmP}{\mbox{\boldmath{$P$}}}

\newcommand{\bmQ}{\mbox{\boldmath{$Q$}}}

\newcommand{\bmR}{\mbox{\boldmath{$R$}}}

\newcommand{\bmS}{\mbox{\boldmath{$S$}}}

\newcommand{\bmT}{\mbox{\boldmath{$T$}}}

\newcommand{\bmU}{\mbox{\boldmath{$U$}}}

\newcommand{\bmV}{\mbox{\boldmath{$V$}}}

\newcommand{\bmW}{\mbox{\boldmath{$W$}}}

\newcommand{\bmX}{\mbox{\boldmath{$X$}}}

\newcommand{\bmY}{\mbox{\boldmath{$Y$}}}

\newcommand{\bmZ}{\mbox{\boldmath{$Z$}}}

\newcommand{\bmchi}{\mbox{\boldmath{$\chi$}}}

\newcommand{\pp}{\prime\prime}

\newcommand{\bc}{\begin{center}}

\newcommand{\ec}{\end{center}}

\newcommand{\bmcalM}{\mbox{\boldmath{${\cal M}$}}}

\newcommand{\ppp}{\prime\prime\prime}

\newcommand{\aup}{\uparrow}

\newcommand{\adn}{\downarrow}

\newcommand{\sbma}{\mbox{\scriptsize{\boldmath{$a$}}}}

\newcommand{\sbmb}{\mbox{\scriptsize{\boldmath{$b$}}}}

\newcommand{\sbmc}{\mbox{\scriptsize{\boldmath{$c$}}}}

\newcommand{\sbmd}{\mbox{\scriptsize{\boldmath{$d$}}}}

\newcommand{\sbme}{\mbox{\scriptsize{\boldmath{$e$}}}}

\newcommand{\sbmf}{\mbox{\scriptsize{\boldmath{$f$}}}}

\newcommand{\sbmg}{\mbox{\scriptsize{\boldmath{$g$}}}}

\newcommand{\sbmh}{\mbox{\scriptsize{\boldmath{$h$}}}}

\newcommand{\sbmi}{\mbox{\scriptsize{\boldmath{$i$}}}}

\newcommand{\sbmj}{\mbox{\scriptsize{\boldmath{$j$}}}}

\newcommand{\sbmk}{\mbox{\scriptsize{\boldmath{$k$}}}}

\newcommand{\sbml}{\mbox{\scriptsize{\boldmath{$l$}}}}

\newcommand{\sbmm}{\mbox{\scriptsize{\boldmath{$m$}}}}

\newcommand{\sbmn}{\mbox{\scriptsize{\boldmath{$n$}}}}

\newcommand{\sbmo}{\mbox{\scriptsize{\boldmath{$o$}}}}

\newcommand{\sbmp}{\mbox{\scriptsize{\boldmath{$p$}}}}

\newcommand{\sbmq}{\mbox{\scriptsize{\boldmath{$q$}}}}

\newcommand{\sbmr}{\mbox{\scriptsize{\boldmath{$r$}}}}

\newcommand{\sbms}{\mbox{\scriptsize{\boldmath{$s$}}}}

\newcommand{\sbmt}{\mbox{\scriptsize{\boldmath{$t$}}}}

\newcommand{\sbmu}{\mbox{\scriptsize{\boldmath{$u$}}}}

\newcommand{\sbmv}{\mbox{\scriptsize{\boldmath{$v$}}}}

\newcommand{\sbmx}{\mbox{\scriptsize{\boldmath{$x$}}}}

\newcommand{\sbmy}{\mbox{\scriptsize{\boldmath{$y$}}}}

\newcommand{\sbmz}{\mbox{\scriptsize{\boldmath{$z$}}}}

\newcommand{\sbmA}{\mbox{\scriptsize{\boldmath{$A$}}}}

\newcommand{\sbmB}{\mbox{\scriptsize{\boldmath{$B$}}}}

\newcommand{\sbmC}{\mbox{\scriptsize{\boldmath{$C$}}}}

\newcommand{\sbmD}{\mbox{\scriptsize{\boldmath{$D$}}}}

\newcommand{\sbmE}{\mbox{\scriptsize{\boldmath{$E$}}}}

\newcommand{\sbmF}{\mbox{\scriptsize{\boldmath{$F$}}}}

\newcommand{\sbmG}{\mbox{\scriptsize{\boldmath{$G$}}}}

\newcommand{\sbmH}{\mbox{\scriptsize{\boldmath{$H$}}}}

\newcommand{\sbmI}{\mbox{\scriptsize{\boldmath{$I$}}}}

\newcommand{\sbmJ}{\mbox{\scriptsize{\boldmath{$J$}}}}

\newcommand{\sbmK}{\mbox{\scriptsize{\boldmath{$K$}}}}

\newcommand{\sbmL}{\mbox{\scriptsize{\boldmath{$L$}}}}

\newcommand{\sbmM}{\mbox{\scriptsize{\boldmath{$M$}}}}

\newcommand{\sbmN}{\mbox{\scriptsize{\boldmath{$N$}}}}

\newcommand{\sbmO}{\mbox{\scriptsize{\boldmath{$O$}}}}

\newcommand{\sbmP}{\mbox{\scriptsize{\boldmath{$P$}}}}

\newcommand{\sbmQ}{\mbox{\scriptsize{\boldmath{$Q$}}}}

\newcommand{\sbmR}{\mbox{\scriptsize{\boldmath{$R$}}}}

\newcommand{\sbmS}{\mbox{\scriptsize{\boldmath{$S$}}}}

\newcommand{\sbmT}{\mbox{\scriptsize{\boldmath{$T$}}}}

\newcommand{\sbmU}{\mbox{\scriptsize{\boldmath{$U$}}}}

\newcommand{\sbmV}{\mbox{\scriptsize{\boldmath{$V$}}}}

\newcommand{\sbmW}{\mbox{\scriptsize{\boldmath{$W$}}}}

\newcommand{\sbmX}{\mbox{\scriptsize{\boldmath{$X$}}}}

\newcommand{\sbmY}{\mbox{\scriptsize{\boldmath{$Y$}}}}

\newcommand{\sbmZ}{\mbox{\scriptsize{\boldmath{$Z$}}}}

\newcommand{\bmnabla}{\mbox{\boldmath{$\nabla$}}}

\newcommand{\bmalpha}{\mbox{\boldmath{$\alpha$}}}

\newcommand{\bmone}{\mbox{\boldmath{$1$}}}

\newcommand{\bmsigma}{\mbox{\boldmath{$\sigma$}}}

\newcommand{\bmmu}{\mbox{\boldmath{$\mu$}}}

\newcommand{\bmell}{\mbox{\boldmath{$\ell$}}}

\newcommand{\bmGamma}{\mbox{\boldmath{$\Gamma$}}}

\newcommand{\La}{Leftarrow}

\newcommand{\raAB}{\overset{\ra}{AB}}

\newcommand{\raAA}{\overset{\ra}{AA}}

\newcommand{\lrda}{\longleftrightarrow \hspace*{-.25in}\longleftrightarrow}

 \newcommand{\lrra}{\longrightarrow \hspace*{-.21in}\longrightarrow}

\newcommand{\llla}{\longleftarrow \hspace*{-.21in}\longleftarrow}

\newcommand{\betallla}{\underset{\hspace*{-.25in}\beta}{\llla}}

\newcommand{\betaYlrra}{\underset{\hspace*{.2in}\beta Y}{\lrra}}

\newcommand{\betaYllla}{\underset{\hspace*{-.2in}\beta Y}{\llla}}

\newcommand{\lbetaetaYlrda}{\underset{\hspace*{-.2in}\beta \eta Y}{\lrda}}

\newcommand{\rbetaetaYlrda}{\underset{\hspace*{.2in}\beta \eta Y}{\lrda}}

\newcommand{\cbetaetaYlrda}{\underset{\beta \eta Y}{\lrda}}

\newcommand{\Ylrra}{\underset{\hspace*{.25in}Y}{\lrra}}

\newcommand{\rbetaetalrda}{\underset{\hspace*{.25in}\beta\eta}{\lrda}}

\newcommand{\rbetaYlrda}{\underset{\hspace*{.25in}\beta Y}{\lrda}}

\newcommand{\lbetaYlrda}{\underset{\hspace*{-.25in}\beta Y}{\lrda}}

\newcommand{\cbetaetalrda}{\underset{\beta\eta}{\lrda}}

\newcommand{\lbetaetalrda}{\underset{\hspace*{-.25in}\beta\eta}{\lrda}}

\newcommand{\Yllla}{\underset{\hspace*{-.25in}Y}{\llla}}

\newcommand{\betaetallla}{\underset{\hspace*{-.25in}\beta\eta}{\llla}}

\newcommand{\betaetalrra}{\underset{\hspace*{.25in}\beta\eta}{\lrra}}

\newcommand{\betaetaYlrra}{\underset{\hspace*{.15in}\beta \eta Y}{\lrra}}

\newcommand{\betaetaYllla}{\underset{\hspace*{-.15in}\beta \eta Y}{\llla}}

\newcommand{\betaetaYnlrda}{\underset{\!\!\!\!\!\beta\eta Y}{\not \!\!\!\! \! \lrda}}

\newcommand{\2}{\underline{2}}

\newcommand{\n}{\underline{n}}

\newcommand{\rat}{\rightarrowtail\!\!\!\!\rightarrow}

\newcommand{\lat}{\leftarrow\!\!\!\!\leftarrowtail}

\newcommand{\rra}{\rightarrow\!\!\!\!\rightarrow}

\newcommand{\uk}{\underline{k}}

\newcommand{\m}{\underline{m}}

\newcommand{\0}{\underline{0}}

\newcommand{\1}{\underline{1}}

\newcommand{\zk}{\underline{k-1}}

\newcommand{\xk}{\underline{k+1}}

\newcommand{\yP}{\underline{P}}

\newcommand{\boxk}{\framebox{\uk}}

\newcommand{\fzk}{\framebox{\zk}}

\newcommand{\fxk}{\framebox{xk}}

\newcommand{\boxm}{\framebox{\m}}

\newcommand{\boxn}{\framebox{\n}}

\newcommand{\rhu}{\rightharpoonup}

\newcommand{\orhua}{\overset{\rhu}{a}}

\newcommand{\orhub}{\overset{\rhu}{b}}

\newcommand{\orhui}{\overset{\rhu}{\imath}}

\newcommand{\orhuj}{\overset{\rhu}{\jmath}}

\newcommand{\orhuk}{\overset{\rhu}{k}}

\newcommand{\orhuF}{\overset{\rhu}{F}}

\newcommand{\orhuG}{\overset{\rhu}{G}}

\newcommand{\orhugj}{\overset{\rhu}{gj}}

\newcommand{\orhuT}{\overset{\rhu}{T}}

\newcommand{\orhuzero}{\overset{\rhu}{0}}

\newcommand{\orhunabla}{\overset{\rhu}{\nabla}}

\newcommand{\orhuu}{\overset{\rhu}{u}}

\newcommand{\orhuPQ}{\overset{\rhu}{PQ}}

\newcommand{\orhuOP}{\overset{\rhu}{OP}}

\newcommand{\orhuPR}{\overset{\rhu}{PR}}

\newcommand{\orhun}{\overset{\rhu}{n}}

\newcommand{\orhuv}{\overset{\rhu}{v}}

\newcommand{\orhur}{\overset{\rhu}{r}}

\newcommand{\orhuN}{\overset{\rhu}{N}}

\newcommand{\orhuS}{\overset{\rhu}{S}}

\newcommand{\orhubeta}{\overset{\rhu}{\beta}}

\newcommand{\orhud}{\overset{\rhu}{d}}

\newcommand{\orhudS}{\overset{\rhu}{dS}}

\newcommand{\oiint}{{\displaystyle

\int\!\!\!\!\!\int\!\!\!\!\!\!\!\!\bigcirc}}

\newcommand{\orhuV}{\overset{\rhu}{V}}

\newcommand{\orhux}{\overset{\rhu}{x}}

\newcommand{\dint}{\displaystyle \int}

\newcommand{\diint}{\displaystyle \iint}

\newcommand{\diiint}{\displaystyle \iiint}

\newcommand{\teta}{\tilde{\eta}}

\newcommand{\ov}{\overline{v}}

\newcommand{\ox}{\overline{x}}

\newcommand{\oX}{\overline{X}}

\newcommand{\oV}{\overline{V}}

\newcommand{\os}{\overline{s}}

\newcommand{\oE}{\overline{E}}

\newcommand{\of}{\overline{f}}

\newcommand{\dsum}{\displaystyle \sum}

\newcommand{\uszG}{\underset{z_1 \in {\cal G}_1}{\dsum}}

\newcommand{\uizG}{\underset{z_i \in {\cal G}_i}{\dsum}}

\newcommand{\uzG}{\underset{z \in {\cal G}}{\dsum}}

\newcommand{\uzGi}{\underset{z\in{\cal G}_i}{\dsum}}

\newcommand{\uj}{\underset{ij}{\dsum}}

\newcommand{\uijk}{\underset{ijk}{\dsum}}

\newcommand{\urhoG}{\underset{\rho_1 \in {\cal G}_1}{\dsum}}

\newcommand{\ui}{\underset{i}{\dsum}}

\newcommand{\obeta}{\overline{\beta}}

\newcommand{\ogamma}{\overline{\gamma}}

\newcommand{\odelta}{\overline{\delta}}

\newcommand{\ovarepsilon}{\overline{\varepsilon}}

\newcommand{\olambda}{\overline{\lambda}}

\newcommand{\uij}{\underset{ij}{\dsum}}

\newcommand{\R}{\mathbb R}

\newcommand{\G}{\mathbb G}

\title{Large Time Behavior of the Relativistic Vlasov Maxwell
System in Low Space Dimension
 \footnote{AMS Subject classification: 35L60, 35Q99,
82C21, 82C22, 82D10 }}

\author{Robert Glassey\\
Department of Mathematics, Indiana University\\
Bloomington, IN 47405, USA\\
\\
Stephen Pankavich\\
Department of Mathematics, University of Texas at Arlington\\
Arlington, TX 76019, USA\\
\\
Jack Schaeffer\\
Department of Mathematical Sciences, Carnegie Mellon University\\
Pittsburgh, PA 15213, USA}
\date{}
\maketitle

\begin{abstract}
When particle speeds are large the motion of a collisionless
plasma is modeled by the relativistic Vlasov Maxwell system.
Large time behavior of solutions which depend on one position
variable and two momentum variables is considered.  In the case of
a single species of charge it is shown that there are solutions
for which the charge density $(\rho = \dint f dv)$ does not
decay in time.  This is in marked contrast to results for the
non-relativistic Vlasov Poisson system in one space dimension.
The case when two oppositely charged species are present and the
net total charge is zero is also considered.  In this case it is
shown that the support in the first component of momentum can grow
at most as $t^{\frac{3}{4}}$.
\end{abstract}

\section{Introduction}

Consider the relativistic Vlasov-Maxwell system:

\be \left\{ \begin{array}{rcl}
\partial_tf^{\alpha} & +& \hat{v}^{\alpha}_1 \partial_x f^{\alpha} +
e^{\alpha}\left(E_1 +\hat{v}^{\alpha}_2 B\right) \partial_{v_1}
f^{\alpha} \\
& + & e^{\alpha} \left(E_2 - \hat{v}^{\alpha}_1
B\right)\partial_{v_2}f^{\alpha} = 0\\
\\
\rho(t,x)&  =& \dint \underset{\alpha}{\dsum}
e^{\alpha}f^{\alpha}(t,x,v) dv\\
\\
j(t,x) &=& \dint \underset{\alpha}{\dsum} e^{\alpha}
f^{\alpha}(t,x,v)\hat{v}^{\alpha} dv\\
\\
E_1(t,x) &=& \dfrac{1}{2} \dint^x_{-\infty} \rho(t,y) dy -
\dfrac{1}{2}\dint^{\infty}_x \rho(t,y) dy\\
\\
\partial_t E_2 &+& \partial_x B = - j_2\\
\\
\partial_t B& +& \partial_x E_2 = 0
\end{array}\right. \label{E1.1}
\ee

\noindent for $\alpha = 1, \ldots , N$.  Here, $t\geq 0$ is time,
$ x\in \BR$ is the first component of position, and $v =
(v_1,v_2)\in \BR^2$ contains the first two components of momentum.
Hence $dv = dv_2 dv_1$ and the $v$ integrals are understood to be
over $\BR^2$.  $f^{\alpha}$ gives the number density in phase
space of particles of mass $m^{\alpha}$ and charge $e^{\alpha}$.
Velocity is given by

$$
\hat{v}^{\alpha} = \dfrac{v}{\sqrt{(m^{\alpha})^2 + |v|^2}}\ ,
$$

\noindent where the speed of light has been normalized to one. The
effects of collisions are neglected.

The initial conditions

$$
\left\{ \begin{array}{rcll} f^{\alpha}(0,x,v) & = & f^{\alpha}_0 (x,v)
\geq 0 & \alpha = 1, \ldots, N\\
\\
E_2(0,x) & = & E_{20}(x)& \\
\\
B(0,x) & = & B_0 (x) &
\end{array}\right.
$$

\noindent are given where it is assumed throughout the paper that
$f^{\alpha}_0 \in C^1_0 (\BR^3)$ is nonnegative and compactly
supported and that $E_{20},B_0 \in C^1_0 (\BR)$ are compactly
supported.  When the neutrality condition,

$$
\diint \underset{\alpha}{\dsum} e^{\alpha}f^{\alpha}_0 dv\, dx = 0,
$$

\noindent holds, we will refer to this as the neutral case.  A
major goal of this paper is to compare the neutral case with the
monocharge case, which may be obtained from (\ref{E1.1}) by
setting $N= 1$.  In the monocharge case we will drop $\alpha$ and
write, for example, $f = f^{\alpha}=f^1$ and take $e^{\alpha}=e^1
= 1,\ m^{\alpha} = m^1 = 1$.

Choose $C_0$ such that $f^{\alpha}_0, E_{20}, B_0$ vanish (for all
$\alpha$) if $|x| \geq C_0$.  The letter $C$ will denote a
positive generic constant which may depend on the initial data
(but not $t,x,v$) and may change from line to line, whereas a
numbered constant (such as $C_0$) has a fixed value.  We also
define the characteristics,
$(X^{\alpha}(s,t,x,v),V^{\alpha}(s,t,x,v))$, of $f^{\alpha}$ by

\be \left\{ \begin{array}{rcll} \dfrac{dX^{\alpha}}{ds} & = &
\hat{V}^{\alpha}_1 & X^{\alpha}(t,t,x,v) = x\\
\\
\dfrac{dV^{\alpha}_1}{ds} & = & e^{\alpha}\left(E_1(x,X^{\alpha})+ \hat{V}^{\alpha}_2
B(s,X^{\alpha})\right)
& V^{\alpha}_1 (t,t,x,v) = v_1\\
\\
\dfrac{dV^{\alpha}_2}{ds} & = & e^{\alpha}\left(E_2 (s,X^{\alpha}) - \hat{V}^{\alpha}_1
B(s,X^{\alpha})\right)& V^{\alpha}_2 (t,t,x,v) = v_2.
\end{array}\right. \label{E1.2}
\ee

\begin{theorem} In the neutral case there is a constant, $C$, such
that

\be
\begin{array}{rcl}
C & \geq & \left. \dint^t_0 \left[E^2_1 + (E_2 - B)^2 + \dint
\underset{\alpha}{\dsum} f^{\alpha} \left( \sqrt{(m^{\alpha})^2 +
|v|^2} - v_1\right) dv \right] \right|_{(\tau, x-t + \tau)}\
d\tau\\
\\
& & \left.+ \dint^t_0 \left[ E^2_1 + (E_2 + B)^2 + \dint
\underset{\alpha}{\dsum} f^{\alpha} \left(\sqrt{(m^{\alpha})^2 +
|v|^2} + v_1\right) dv \right] \right|_{(\tau,x+t - \tau)} \
d\tau
\end{array}\label{E1.3}
\ee

\noindent for all $t \geq 0, x \in \BR$.  In the monocharge case
there is a constant, $C$, such that

\be \left. C(C_0 + t-x) \geq \dint^t_0 \left[ (E_2-B)^2 + f
\dfrac{(\sqrt{1+|v|^2}-v_1)^2}{\sqrt{1+|v|^2}} dv
\right]\right|_{(\tau, x - t + \tau)}\ d\tau \label{E1.4} \ee

\noindent for $x < C_0 + t$ and

\be \left.C(C_0 + t + x) \geq \dint^t_0 \left[ (E_2+B)^2 + f
\dfrac{(\sqrt{1+|v|^2}+v_1)^2}{\sqrt{1+|v|^2}} dv \right]
\right|_{(\tau, x + t - \tau)}\ d\tau \label{E1.5} \ee

\noindent for $-C_0 - t < x$.
\end{theorem}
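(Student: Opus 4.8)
The plan is to read off \eqref{E1.3}--\eqref{E1.5} from the local energy and momentum balance laws of \eqref{E1.1}, integrated over light-cone-shaped space-time regions by the divergence theorem, using finite speed of propagation throughout (so $f^\alpha$ has compact $x$-support and $f^\alpha,E_2,B$ vanish for $|x|\geq C_0+t$).

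I would first record the densities: the energy density $e=\tfrac12(E_1^2+E_2^2+B^2)+\int\sum_\alpha f^\alpha\sqrt{(m^\alpha)^2+|v|^2}\,dv$, the energy flux (which also equals the momentum density) $S=E_2B+\int\sum_\alpha f^\alpha v_1\,dv$, and the momentum flux $T=\tfrac12(E_2^2+B^2-E_1^2)+\int\sum_\alpha f^\alpha\hat{v}^\alpha_1 v_1\,dv$. Testing the Vlasov equation against $\sqrt{(m^\alpha)^2+|v|^2}$ and against $v_1$, integrating in $v$, and combining with the Maxwell relations, with $\partial_xE_1=\rho$, and with $\partial_tE_1=-j_1$ (itself a consequence of $\partial_t\rho+\partial_xj_1=0$ and the compact $x$-support of $f^\alpha$), one verifies the conservation laws
\[
\partial_t e+\partial_x S=0,\qquad \partial_t S+\partial_x T=0 .
\]
Two elementary observations drive the rest. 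First, $e-S=\tfrac12E_1^2+\tfrac12(E_2-B)^2+\int\sum_\alpha f^\alpha\bigl(\sqrt{(m^\alpha)^2+|v|^2}-v_1\bigr)\,dv\geq0$, and symmetrically $e+S\geq0$ with $E_2+B$ and $+v_1$, because $\sqrt{(m^\alpha)^2+|v|^2}\geq|v_1|$. Second, subtracting the two laws gives $\partial_t(e-S)+\partial_x(S-T)=0$ with $S-T=(e-S)-Q$, where
\[
Q:=e+T-2S=(E_2-B)^2+\int\sum_\alpha f^\alpha\,\frac{\bigl(\sqrt{(m^\alpha)^2+|v|^2}-v_1\bigr)^2}{\sqrt{(m^\alpha)^2+|v|^2}}\,dv\geq0 ;
\]
crucially the $E_1^2$ terms cancel in $Q$.

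For the neutral case, neutrality makes $E_1(0,\cdot)$ compactly supported, hence $\int_{\BR}e(0,y)\,dy<\infty$. Integrating $\partial_\tau e+\partial_y S=0$ over the backward light cone with apex $(t,x)$, the divergence theorem gives $\int_0^t(e-S)\big|_{(\tau,x-t+\tau)}\,d\tau+\int_0^t(e+S)\big|_{(\tau,x+t-\tau)}\,d\tau=\int_{x-t}^{x+t}e(0,y)\,dy\leq\int_{\BR}e(0,y)\,dy=:C$; since $e-S$ dominates half of the first bracket in \eqref{E1.3} and $e+S$ half of the second, this gives \eqref{E1.3} with $2C$ in place of $C$. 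For the monocharge case $E_1\notin L^2$ and $\int_{\BR}e(0,\cdot)$ diverges, so I would instead use the localized law $\partial_\tau(e-S)+\partial_y\bigl((e-S)-Q\bigr)=0$. Since $f,E_2,B$ vanish for $y\geq C_0+\tau$, we have $Q\equiv0$ there. Fixing $x<C_0+t$ and integrating over the parallelogram bounded by $\tau=0$, $\tau=t$, $y=x-t+\tau$, and $y=C_0+\tau$: the flux through the right edge vanishes because $Q=0$ on it, the flux through the left (characteristic) edge is exactly $\int_0^t Q\big|_{(\tau,x-t+\tau)}\,d\tau$, and dropping the nonnegative top-edge term $\int_x^{C_0+t}(e-S)(t,y)\,dy$ leaves $\int_0^t Q\big|_{(\tau,x-t+\tau)}\,d\tau\leq\int_{x-t}^{C_0}(e-S)(0,y)\,dy$. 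Because $(e-S)(0,y)=\tfrac18\|\rho_0\|_{L^1}^2$ for $y<-C_0$ while $(e-S)(0,\cdot)$ is bounded on $[-C_0,C_0]$, comparing the lengths of the intervals involved bounds the right-hand side by $C(C_0+t-x)$; this is \eqref{E1.4}, and \eqref{E1.5} follows by the reflection $x\mapsto-x$ (which sends $E_2-B\mapsto E_2+B$ and $\sqrt{1+|v|^2}-v_1\mapsto\sqrt{1+|v|^2}+v_1$).

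The main obstacle is spotting the localized identity used in the monocharge case. It requires \emph{both} conservation laws --- so the full stress component $T$, not merely $e$ and $S$, must enter --- and it closes only because the non-decaying Coulomb field $E_1$ drops out of $Q=e+T-2S$ and because $Q$ vanishes ahead of the light cone emanating from the initial support, so the unbounded $E_1$ never appears in the estimate. Once that identity is available, the remaining steps --- verifying the two conservation laws by direct computation, applying the divergence theorem, and the elementary interval bookkeeping --- are routine.
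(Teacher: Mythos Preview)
Your proposal is correct and follows essentially the same route as the paper: the paper likewise derives the two local conservation laws (with your $S,T$ written as $m,\ell$), integrates $\partial_\tau e+\partial_y m=0$ over the backward cone for the neutral estimate, and for the monocharge estimates integrates $\partial_\tau(e-m)+\partial_y(m-\ell)=0$ over the same parallelogram you describe, using that $e-2m+\ell$ (your $Q$) is nonnegative, contains no $E_1$, and vanishes on $y=C_0+\tau$. The only cosmetic difference is that the paper obtains \eqref{E1.5} by the mirror-image integration over $\{-C_0-\tau\le y\le x_0-\tau\}$ rather than by invoking the reflection symmetry.
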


\noindent The proof of this theorem relies on conservation of energy and of
momentum (in the monocharge case) and is contained in Section 2.

\begin{theorem} In the neutral case there is a constant, $C$, such
that

$$
|v_2| \leq C + C \sqrt{t-|x|+ C_0}
$$

\noindent on the support of $f^{\alpha}$ for every $\alpha$.  In
the monocharge case there is a positive constant, $C$, such that

$$
|v_2| \leq C + C \sqrt{(t+C_0)^2 -x^2}
$$

\noindent on the support of $f$.
\end{theorem}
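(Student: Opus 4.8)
The plan is to pass to the electromagnetic potential, use conservation of canonical momentum to reduce the desired estimate for $|v_2|$ to a pointwise bound on that potential, and then control the potential by the null--ray flux inequalities of Theorem~1.1.

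Since $B_0,E_{20}\in C^1_0(\BR)$, let $A$ be the solution of the one dimensional wave equation $\partial_{tt}A-\partial_{xx}A=j_2$ with $A(0,x)=\int_{-\infty}^{x}B_0(y)\,dy$ and $\partial_tA(0,x)=-E_{20}(x)$; one checks that $B=\partial_xA$ and $E_2=-\partial_tA$ for all $t$ (by uniqueness for the system satisfied by $(E_2,B)$), and that $|A(0,\cdot)|\le\|B_0\|_{L^1}\le C$. Along any characteristic $(X^{\alpha},V^{\alpha})$ of $f^{\alpha}$ one computes, using \eqref{E1.2},
\[
\frac{d}{ds}\Big[V^{\alpha}_2(s)+e^{\alpha}A\big(s,X^{\alpha}(s)\big)\Big]
=e^{\alpha}\big(E_2-\hat{V}^{\alpha}_1B\big)+e^{\alpha}\big(\partial_tA+\hat{V}^{\alpha}_1\partial_xA\big)=0 ,
\]
so $V^{\alpha}_2+e^{\alpha}A(s,X^{\alpha})$ is conserved along characteristics. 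Given $(x,v)$ with $f^{\alpha}(t,x,v)>0$, trace the characteristic back to $s=0$: there $|X^{\alpha}(0)|\le C_0$, $|V^{\alpha}(0)|\le C$, and $|A(0,X^{\alpha}(0))|\le C$, so (the charges being fixed) $|v_2|\le C+C\,|A(t,x)|$. Moreover finite speed of propagation forces $|x|\le C_0+t$, so it suffices to bound $|A(t,x)|$ on that set.

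By d'Alembert's formula $A(t,x)$ equals terms built from $A(0,\cdot)$ and $E_{20}$ -- which are bounded since $A(0,\cdot)$ is bounded and $E_{20}$ is compactly supported -- plus $\tfrac12\iint_{\Delta(t,x)}j_2$, where $\Delta(t,x)$ is the solid backward light cone with apex $(t,x)$. Foliate $\Delta(t,x)$ by the segments on which $y+s$ is constant; these are parallel to the right null edge and are precisely the transport lines of $E_2-B$, since subtracting the two Maxwell equations gives $(\partial_t-\partial_x)(E_2-B)=-j_2$. Hence along each such segment $j_2$ is minus the $s$--derivative of $E_2-B$ restricted to the segment, the line integral telescopes to the endpoint values of $E_2-B$ (one endpoint on $\{s=0\}$, the other on the \emph{left} null edge), and integrating in the transverse variable collapses the source term to a fixed multiple of $\int_0^t(E_2-B)(s,x-t+s)\,ds$, up to a bounded data term. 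Foliating instead by the segments on which $y-s$ is constant (the transport lines of $E_2+B$, from $(\partial_t+\partial_x)(E_2+B)=-j_2$) collapses the source term, in the same way, to a multiple of $\int_0^t(E_2+B)(s,x+t-s)\,ds$. Thus $A(t,x)$ differs by a bounded amount from a constant multiple of the integral of $E_2-B$ along the left null edge of $\Delta(t,x)$, and equally from one of the integral of $E_2+B$ along the right null edge.

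Finally, because all data vanishes for $|y|>C_0$ and the coupled system propagates with speed at most one, $E_2\pm B$ vanishes at time $\tau$ whenever $|y|>C_0+\tau$. On the right edge $\{(\tau,x+t-\tau):0\le\tau\le t\}$ this forces $\tau\ge\tfrac12\big((x+t)-C_0\big)$, leaving an effective interval of length $\le\tfrac12(t-x+C_0)$; on the left edge the integrand is supported in $\tau\ge\tfrac12\big((t-x)-C_0\big)$, an interval of length $\le\tfrac12(t+x+C_0)$. Using the right edge when $x\ge0$ and the left edge when $x\le0$, Cauchy--Schwarz gives
\[
|A(t,x)|\le C+C\,\big(t-|x|+C_0\big)^{1/2}\Big(\int_0^t(E_2\mp B)^2\big|_{\text{chosen edge}}\,d\tau\Big)^{1/2}.
\]
In the neutral case \eqref{E1.3} bounds the last factor by an absolute constant, yielding $|v_2|\le C+C\sqrt{t-|x|+C_0}$. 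In the monocharge case \eqref{E1.4}--\eqref{E1.5} bound the left-- (resp.\ right--) edge square integral by $C(C_0+t-x)$ (resp.\ $C(C_0+t+x)$); using the right edge, $|A(t,x)|\le C+C\,(t-x+C_0)^{1/2}(t+x+C_0)^{1/2}=C+C\sqrt{(t+C_0)^2-x^2}$, so $|v_2|\le C+C\sqrt{(t+C_0)^2-x^2}$ on the support of $f$. The delicate point is the collapse of the two dimensional cone integral of $j_2$ onto a single null edge: it is the freedom to choose the edge according to the sign of $x$, together with finite propagation speed, that produces the sharp weight $t-|x|+C_0$ in place of the cruder $t+|x|+C_0$; after that, balancing interval length against the flux bounds of Theorem~1.1 is routine.
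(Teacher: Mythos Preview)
Your proof is correct and follows the same strategy as the paper: introduce the potential $A$ with $\partial_x A=B$ and $\partial_t A=-E_2$, exploit conservation of the canonical momentum $v_2+e^{\alpha}A$ along characteristics, and bound $A$ via Cauchy--Schwarz together with the null-ray flux estimates of Theorem~1.1. The paper's only shortcut is to set $A(t,x)=\int_{-\infty}^x B(t,y)\,dy$ and read off $(\partial_t\pm\partial_x)A=-(E_2\mp B)$ directly, which yields $A(t,x)=A(0,x\mp t)-\int_0^t(E_2\mp B)(\tau,x\mp t\pm\tau)\,d\tau$ in one line and so bypasses your d'Alembert--plus--foliation collapse (which, when carried out, reproduces exactly this identity).
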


\noindent The proof of Theorem 1.2 is in Section 3.

\begin{theorem}  There are solutions of the monocharge problem for
which there exist $x_0 \in \BR$ and $C > 0$ such that

\be \dint^{\infty}_{x_0+t} \rho(t,x) dx > C\label{E1.6} \ee

\noindent for all $t \geq 0$.  Furthermore, there exists $C > 0$
such that

$$\| \rho(t,\cdot) \|_{L^p(\BR)} > C
$$

\noindent for all $t \geq 0$ and $p \in [1,\infty]$.
\end{theorem}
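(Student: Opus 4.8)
The plan is to establish (\ref{E1.6}) by choosing special initial data for which a family of particles of positive $f_0$-mass stays within a bounded distance of the light cone $\{x=t+x_0\}$ for all time; the $L^p$ statement is then immediate. Since $f$ propagates with speed $<1$ and $\mathrm{supp}\,f_0\subseteq\{|x|\le C_0\}$, we have $\mathrm{supp}\,\rho(t,\cdot)\subseteq[-(C_0+t),\,C_0+t]$, so once (\ref{E1.6}) holds, $\int_{x_0+t}^{C_0+t}\rho(t,x)\,dx>C$ over an interval of the fixed length $C_0-x_0$; H\"older's inequality then gives $\|\rho(t,\cdot)\|_{L^p(\BR)}\ge C\,(C_0-x_0)^{-(1-1/p)}\ge C\min\{1,(C_0-x_0)^{-1}\}>0$ for all $t\ge0$ and $p\in[1,\infty]$.

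\noindent\emph{Reduction.} Along any characteristic $\tfrac{d}{ds}\bigl(X(s)-s\bigr)=\hat V_1(s)-1<0$, so $X(s)-s$ is strictly decreasing with $\lim_{s\to\infty}\bigl(X(s)-s\bigr)=x-\int_0^\infty\bigl(1-\hat V_1(s)\bigr)\,ds$ for the characteristic with data $(x,v)$ at $s=0$. By Liouville's theorem (the Vlasov flow is volume preserving in $(x,v_1,v_2)$), $\int_{x_0+t}^\infty\rho(t,y)\,dy=\iint_{\{X(t;0,x,v)>x_0+t\}}f_0\,dv\,dx$, and since $X(s)-s$ decreases, the sets $A_t:=\{(x,v):X(t;0,x,v)>x_0+t\}$ are nested decreasing in $t$. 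Hence it suffices to produce a constant $x_0$ and a set $G$ with $\iint_G f_0\,dv\,dx>0$ on which $\int_0^\infty\bigl(1-\hat V_1(s)\bigr)\,ds<x-x_0$; then $G\subseteq A_t$ for every $t$, so $\int_{x_0+t}^\infty\rho(t,y)\,dy\ge\iint_G f_0\,dv\,dx>0$ for all $t\ge0$.

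\noindent\emph{Construction and key estimate.} Take $E_{20}=B_0\equiv0$ and $f_0\in C^1_0(\BR^3)$ nonnegative, even in $v_2$, and invariant under $(x,v_1,v_2)\mapsto(-x,-v_1,v_2)$; standard symmetry/uniqueness arguments then give $E_2\equiv B\equiv0$ (so $v_2$ is conserved along characteristics), $\rho(t,\cdot)$ even and $E_1(t,\cdot)$ odd, whence $E_1(t,x)\ge0$ for $x>0$ and exactly half of $M:=\iint f_0$ lies in $\{x<0\}$ at every time. Put the right bunch $B_+:=\mathrm{supp}\,f_0\cap\{x>0\}$ in a thin slab $\{x\in[C_0-\delta,C_0],\ |v_1-v_1^*|\le\delta,\ |v_2|\le\delta\}$ near the edge, with $v_1^*$ large and $\delta$ small, and arrange $f_0$ on $B_+$ so that within each sheet $\{v_2=c\}$ the variable $v_1$ is nondecreasing in $x$. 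A bootstrap using $E_1\ge0$ on $\{x>0\}$ keeps every $B_+$-characteristic in $\{x>0\}$ with $V_1(s)\ge v_1^*-\delta>0$; by symmetry the mirror bunch stays in $\{x<0\}$, so all positive-$x$ mass is carried by $B_+$. For a $B_+$-characteristic $R$ the $V_1$-equation reads $\dot V_1(s)=E_1(s,X_R(s))=$ ($B_+$-mass currently to the left of $R$). Because $E_1$ is nondecreasing in $x$ and $\hat v_1$ is strictly increasing in $v_1$ for fixed $v_2$, the prescribed ordering makes the flow preserve the $x$-order of any two $B_+$-characteristics lying in the same $v_2$-sheet, so the $B_+$-mass in $R$'s own sheet lying to its left is constant in time. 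Taking $G$ to consist of those $R\in B_+$ that start at or above the median $x$-position of their sheet yields $\dot V_1(s)=E_1(s,X_R(s))\ge\varepsilon:=\tfrac12\inf_c m(c)>0$ for all $s\ge0$, where $m(c)>0$ is the $B_+$-mass in the sheet $\{v_2=c\}$; hence $V_1(s)\ge v_1^*-\delta+\varepsilon s$ and, since $|V_2|\le\delta$ is conserved, $1-\hat V_1(s)=\frac{1+V_2^2}{\langle V(s)\rangle\bigl(\langle V(s)\rangle+V_1(s)\bigr)}\le\frac{1+\delta^2}{2(v_1^*-\delta+\varepsilon s)^2}$, so $\int_0^\infty\bigl(1-\hat V_1(s)\bigr)\,ds\le K:=\frac{1+\delta^2}{2\varepsilon(v_1^*-\delta)}<\infty$, uniformly over $G$. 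Since every $(x,v)\in G$ has $x\ge C_0-\delta$, the choice $x_0:=C_0-\delta-K-1$ makes $\int_0^\infty(1-\hat V_1)\,ds\le K<x-x_0$ on $G$, and $\iint_G f_0\,dv\,dx=\tfrac14M>0$; by the reduction, (\ref{E1.6}) holds with $C=\tfrac14M$.

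\noindent\emph{Main obstacle.} The real work is the order-preservation step: one must prevent a $B_+$-characteristic starting to the left of a member of $G$ from ever overtaking it, which would wreck the force lower bound. This is delicate because $\hat v_1$ depends on $v_2$ — for $V_1\sim\varepsilon s$ a $v_2$-difference of size $\delta$ can reverse which of two particles is faster — so the comparison must be made one $v_2$-sheet at a time, where $\hat v_1$ is genuinely monotone in $v_1$, using that a same-sheet $v_1$-gap is nondecreasing because $E_1$ is nondecreasing in $x$; this is exactly why the ``well-ordered'' structure must be imposed on $f_0$, and realizing it within $C^1_0$ with no mass leaking out of the relevant half-sheets needs some care. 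The remaining ingredients — the symmetry reduction, the bootstraps confining $B_\pm$, conservation of $M$, and the H\"older estimate — are routine.
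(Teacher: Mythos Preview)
Your reduction and the symmetry set-up are fine: with $E_{20}=B_0\equiv 0$ and the stated parities one does get $E_2\equiv B\equiv 0$, $v_2$ conserved, $E_1$ odd and nondecreasing in $x$; the bootstrap confining $B_\pm$ to $\{\pm x>0\}$ with $V_1\ge v_1^*-\delta$ is correct, and so is the H\"older step at the end. The gap is exactly where you flag it, and it is not merely a matter of ``some care''.

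First, for $f_0\in C^1_0(\BR^3)$ with $\iint_{B_+} f_0\,dv\,dx>0$, the support of $f_0(\cdot,\cdot,c)$ in each sheet $\{v_2=c\}$ is an open two--dimensional set in $(x,v_1)$; there is no way to ``arrange $f_0$ so that $v_1$ is nondecreasing in $x$'' on such a set. For any smooth data there will be pairs $R_1,R_2$ in the same sheet with $X_{R_1}(0)<X_{R_2}(0)$ but $V_{1,R_1}(0)>V_{1,R_2}(0)$, and your order--preservation argument does not apply to them. Second, even granting in--sheet order preservation, it buys nothing for the force: $E_1(s,X_R(s))$ equals the \emph{total} $B_+$--mass to the left of $X_R(s)$, integrated over all $v_2$, whereas your invariant is only ``the $B_+$--mass in $R$'s own sheet lying to its left''. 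For continuous $f_0$ the mass in a single sheet $\{v_2=c\}$ is zero, so your $\varepsilon=\tfrac12\inf_c m(c)$ vanishes; and if $m(c)$ is read as a density, the in--sheet invariant controls an infinitesimal contribution to $E_1$ while particles in other sheets are free to overtake $R$. Either way the claimed lower bound $\dot V_1(s)\ge\varepsilon>0$ is unjustified, and without it your integral $\int_0^\infty(1-\hat V_1)\,ds$ is not shown to be finite on any set of positive $f_0$--measure.

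The paper's argument avoids characteristics entirely. It keeps your nonincreasing $\mu(t)=\int_{x_0+t}^{C_0+t}\rho\,dy$ but pairs it with a second nonincreasing quantity $\mathcal E(t)=\int_{x_0+t}^{C_0+t}(e-m)\,dy$, obtained from the local energy and momentum conservation laws $\partial_t e+\partial_x m=0$, $\partial_t m+\partial_x\ell=0$. Since $E_1(t,y)\ge \tfrac{M}{2}-\mu(t)\ge 0$ on $[x_0+t,C_0+t]$ one has
\[
\mathcal E(0)\ \ge\ \mathcal E(t)\ \ge\ \tfrac12\!\int_{x_0+t}^{C_0+t}E_1^2\,dy\ \ge\ \tfrac12(C_0-x_0)\Bigl(\tfrac{M}{2}-\mu(t)\Bigr)^2,
\]
so $\mu(t)\ge \tfrac{M}{2}-\sqrt{2\mathcal E(0)/(C_0-x_0)}$. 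It remains to choose data with $\mu(0)\le M/2$ and $\mathcal E(0)<\tfrac12(C_0-x_0)(M/2)^2$: a left bunch together with a right bunch translated by a large $W$ in $v_1$ makes $\sqrt{1+|v|^2}-v_1=O(W^{-1})$ on the right, hence $\mathcal E(0)$ as small as desired. No symmetry, no ordering, no per--particle tracking is needed.
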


\noindent The second assertion of Theorem 1.3 follows from (\ref{E1.6}) by
using H\"{o}lder's inequality:

$$
C < \dint^{C_0+t}_{x_0+t} \rho(t,x)dx \leq \|
\rho(t,\cdot)\|_{L^p(\BR)} (C_0 - x_0)^{1-\frac{1}{p}}.
$$

\noindent The proof of Theorem 1.3 is contained in Section 4.  In \cite{8}
an analogous, but more detailed, result is obtained for the
relativistic Vlasov Poisson system (which may be obtained from
(\ref{E1.1}) by setting $E_2 = B = 0$).

\begin{theorem} For the neutral problem there is a constant, $C$,
such that

\be |v_1| \leq C+Ct^{\frac{1}{2}} (t - |x|+ 2
C_0)^{\frac{1}{4}} \label{E1.7} \ee

\noindent on the support of $f^{\alpha}$ for every $\alpha$.
\end{theorem}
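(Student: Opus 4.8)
The plan is to estimate $|V_1^\alpha|$ along characteristics by integrating the force equation $dV_1^\alpha/ds = e^\alpha(E_1 + \hat V_2^\alpha B)$ and controlling each of the two terms on the right-hand side. For the term $E_1$, I would use the representation $E_1(t,x) = \tfrac12\int_{-\infty}^x \rho\,dy - \tfrac12\int_x^\infty \rho\,dy$ together with conservation of total charge of each species (the $L^1$ norm of $f^\alpha$ is conserved, and the $v$-integral of $f^\alpha$ is bounded by the measure of its $v$-support times $\|f^\alpha\|_\infty$), to obtain a bound on $|E_1|$ in terms of the size of the momentum support — which is exactly what Theorem 1.2 and the quantity we are trying to bound provide. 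The term $\hat V_2^\alpha B$ is more delicate: $|\hat V_2^\alpha|\le 1$, so it suffices to bound $\int_0^t |B(\tau, X^\alpha(\tau))|\,d\tau$. Here I would write $B = \tfrac12[(E_2+B) - (E_2-B)]$ and exploit the fact that $E_2\pm B$ satisfy transport equations along the null directions $x\mp t = \text{const}$, so that the spacetime integrals of $(E_2\pm B)^2$ along those null lines are controlled by Theorem 1.1 (estimates \eqref{E1.3}). Applying Cauchy–Schwarz in $\tau$ to pass from the $L^2$-in-time bound on $E_2\pm B$ along null lines to an $L^1$-in-time bound introduces a factor $t^{1/2}$, which is the source of the $t^{1/2}$ on the right side of \eqref{E1.7}.

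More concretely, I would proceed as follows. First, fix a characteristic $(X^\alpha, V^\alpha)$ with $(X^\alpha(t), V^\alpha(t)) = (x,v)$ in the support of $f^\alpha$, and note that since particles travel with speed $<1$, the characteristic stays in a region where $|X^\alpha(\tau)| \le C_0 + (t-\tau)$ run backward, or more usefully stays within the light cone so that the relevant null coordinate $\tau - |X^\alpha(\tau)|$ along the characteristic is comparable to $t - |x|$ up to an additive $C_0$. Second, integrate: $|V_1^\alpha(t)| \le |V_1^\alpha(0)| + \int_0^t |E_1| \, d\tau + \int_0^t |B|\, d\tau$, both integrals evaluated along $X^\alpha$. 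Third, bound the $E_1$ integral: at each time $\tau$, $|E_1(\tau, \cdot)| \le \tfrac12 \|\rho(\tau,\cdot)\|_{L^1} \le C$, since total charge is conserved; this already gives a $Ct$ contribution, which is dominated by the right side of \eqref{E1.7} when $t - |x| + 2C_0 \ge c\, t^2$, and otherwise one must be more careful — actually, the cleaner route is to bound $|E_1|$ pointwise by $C$ and observe that the claimed bound $Ct^{1/2}(t-|x|+2C_0)^{1/4}$ already exceeds $Ct$ precisely when $t \lesssim (t-|x|+2C_0)^{1/2}$; in the complementary regime $t - |x|$ is small compared to $t^2$, i.e. $|x|$ is close to $t$, and then the characteristic spends all its time near the light cone where the null-line energy estimates are sharp. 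The correct uniform statement comes from using $\int_0^t|E_1|\,d\tau$ bounded by the null-coordinate distance: writing $E_1$ via $\rho$ and using that $\rho(\tau,y)$ is supported in $|y|\le C_0+\tau$ and integrates to a constant, one sees $\int_0^t |E_1(\tau, X^\alpha(\tau))|\,d\tau \le C(t-|x|+C_0)^{1/2}\cdot t^{1/2}$ as well, by a change of variables to null coordinates — but I expect the paper actually handles the $E_1$ contribution by the same Cauchy–Schwarz device after relating $\int E_1^2$ along null lines to Theorem 1.1. Fourth, bound the $B$ integral: along $X^\alpha(\tau)$ the value $B(\tau, X^\alpha(\tau))$ is $\tfrac12[(E_2+B)(\tau,X^\alpha) - (E_2-B)(\tau,X^\alpha)]$; each of $(E_2\pm B)$ is constant along its own null characteristic, so its value at $(\tau, X^\alpha(\tau))$ equals its value at the point where that null line meets, say, the other characteristic family — and the spacetime-$L^2$ bound \eqref{E1.3} along null lines, combined with Cauchy–Schwarz in $\tau$ over $[0,t]$ and the fact that as $\tau$ ranges over $[0,t]$ the foot of the null line through $(\tau, X^\alpha(\tau))$ ranges over a set of null-coordinate length $\le C(t - |x| + C_0)$, yields $\int_0^t |B(\tau, X^\alpha(\tau))|\, d\tau \le C\, t^{1/2} (t-|x|+2C_0)^{1/4}$, since \eqref{E1.3} gives $\int (E_2\pm B)^2 \le C$ along each null line and there is a further $(t-|x|+C_0)^{1/2}$ gain from averaging over which null line is hit. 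Collecting the three contributions gives \eqref{E1.7}.

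The main obstacle I anticipate is the bookkeeping in Step four: making precise the claim that the values $(E_2\pm B)(\tau, X^\alpha(\tau))$, as $\tau$ varies, are controlled by the null-line $L^2$ bounds of Theorem 1.1 with only a square-root loss. This requires carefully tracking the Jacobian of the map $\tau \mapsto (\text{null coordinate of the characteristic point at time } \tau)$, using that $dX^\alpha/d\tau = \hat V_1^\alpha \in (-1,1)$ so this map is monotone with derivative bounded away from the degenerate values $\pm 1$ — except possibly when $|V_1^\alpha|$ is itself large, which is circular. The resolution is presumably a bootstrap: one uses the a priori bound $|v_2| \le C + C\sqrt{t-|x|+C_0}$ from Theorem 1.2 to keep $\hat V_1^\alpha$ from being too close to $\pm1$ over most of the time interval, or one argues that in the regime where $|V_1^\alpha|$ is large the characteristic is moving nearly at the speed of light and hence stays near a single null line, so the measure over which the large force acts is correspondingly small. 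Handling this degeneration cleanly — rather than the relativistic energy density $f^\alpha(\sqrt{(m^\alpha)^2+|v|^2} \mp v_1)$ appearing in \eqref{E1.3} becoming small exactly when $\hat v_1 \to \pm1$, which must be used to compensate — is where the real work lies, and it is the reason the exponent is $\tfrac34$ rather than $\tfrac12$.
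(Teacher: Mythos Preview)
Your instincts are right in several places---using the null--line $L^2$ bounds from \eqref{E1.3} with Cauchy--Schwarz, and recognizing that when $|V_1|$ is large the characteristic hugs a single null line---but the proposal as written has real gaps, and the paper's argument differs in three essential respects.

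First, the paper does \emph{not} integrate the force over $[0,t]$. It introduces a doubling time: assuming $V_1(t)>2C_1$, one sets $\Delta=\sup\{\tau:V_1(s)\ge\tfrac12 V_1(t)\text{ on }[t-\tau,t]\}$, so that $V_1(t-\Delta)=\tfrac12 V_1(t)$ and on $[t-\Delta,t]$ one \emph{knows} $V_1$ is large. This is exactly the bootstrap you gesture at in your last paragraph but never implement; without it you cannot say the characteristic is close to a null line, and your Jacobian worry is fatal. On this interval one defines the comparison null line $X_C(s)=X(t)+s-t$ and, using Theorem~1.2 to bound $1-\hat V_1\le C(t-|X(t)|+C_0)/V_1(t)^2$, obtains $|X(s)-X_C(s)|\le S(t):=C\Delta(t-|X(t)|+2C_0)/V_1(t)^2$.

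Second, your treatment of $E_1$ is incomplete. Writing $E_1(s,X(s))=E_1(s,X_C(s))+\int_{X_C(s)}^{X(s)}\rho\,dy$, the first term is handled by \eqref{E1.3} and Cauchy--Schwarz (giving $C\sqrt{\Delta}$), but the second term requires a pointwise bound on $\rho$ that you do not supply. The paper proves a separate lemma, $\int\sum_\alpha f^\alpha\,dv\le C\sqrt{k\,\sigma_-}$ with $k=\int\sum f^\alpha\sqrt{(m^\alpha)^2+|v|^2}\,dv$ and $\sigma_-=\int\sum f^\alpha(\sqrt{(m^\alpha)^2+|v|^2}-v_1)\,dv$, and then uses Cauchy--Schwarz together with $\iint k\le C\Delta$ and $\iint_{\text{strip of width }S(t)}\sigma_-\le CS(t)$ (the latter from \eqref{E1.3}) to bound the correction by $C\sqrt{\Delta S(t)}$. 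This interpolation lemma is the missing ingredient.

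Third, your handling of $\hat V_2 B$ goes the wrong way. You discard $|\hat V_2|\le 1$ and then try to control $\int|B|$ via the null decomposition $B=\tfrac12[(E_2+B)-(E_2-B)]$; this runs straight into the Jacobian problem again. The paper does the opposite: it keeps $\hat V_2$, bounds $|\hat V_2|\le C|V_2|/V_1\le C\sqrt{t-|X(t)|+2C_0}/V_1(t)$ on $[t-\Delta,t]$ via Theorem~1.2, and uses the crude pointwise bound $|B|\le C$ from \eqref{E2.5}. This yields $\int_{t-\Delta}^t|\hat V_2 B|\,ds\le C\Delta\sqrt{t-|X(t)|+2C_0}/V_1(t)$ directly, with no null--line analysis needed for $B$ at all.

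Putting these together gives $\tfrac12 V_1(t)\le C\sqrt{\Delta}+C\Delta\sqrt{t-|X(t)|+2C_0}/V_1(t)$, which one solves as a quadratic in $V_1(t)$ to obtain \eqref{E1.7}.
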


\noindent The proof is in Section 5.  A similar, but different, estimate is
obtained in \cite{8} for the relativistic Vlasov Poisson system.  Also, note that (\ref{E1.7}) rules out an estimate like (\ref{E1.6}).  If (\ref{E1.6}) held, then there would be characteristics for
which $f^{\alpha}  \neq 0$ and

\be X^{\alpha}(t,0,x,v) \geq x_0 + t \label{E1.8} \ee

\noindent for all $t \geq 0$.  Then by (\ref{E1.7}) and
(\ref{E1.8})

$$
\left| V^{\alpha}_1 (t,0,x,v)\right| \leq C+Ct^{\frac{1}{2}}
$$

\noindent so

$$ \begin{array}{rcl} 1-\hat{V}^{\alpha}_1 (t,0,x,v) & = &
\dfrac{1+(V^{\alpha}_2)^2}{\sqrt{1+|V^{\alpha}|^2}(\sqrt{1+|V^{\alpha}|^2} + V^{\alpha}_1)} \\
\\
 & \geq & \dfrac{1}{2(1+|V^{\alpha}|^2)} \geq \dfrac{C}{1+t}
 \end{array}
 $$

 \noindent and

 $$
 \begin{array}{rcl} C\ln (1+t) & \leq & \dint^t_0
 \left(1-\hat{V}^{\alpha}_1(s,0,x,v)\right) ds\\
 \\
 & = & t -X^{\alpha}(t,0,x,v) +x \leq x - x_0
 \end{array}
 $$

 \noindent for all $t \geq 0$.

 Finally Section 6 contains the proof of

 \begin{theorem} In both the neutral and monocharge cases there
 are no nontrivial steady solutions with $f^{\alpha}, E_2$ and $B$ compactly
 supported.
\end{theorem}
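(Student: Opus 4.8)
The idea is to read off from the Vlasov equation a pointwise ``momentum--flux'' identity that every steady solution must satisfy, and show it clashes with the compact support of $f^\alpha$ and $B$. Write $Q:=\diint \underset{\alpha}{\dsum} e^\alpha f^\alpha\,dv\,dx$ for the total charge; in the monocharge case $Q=\diint f\,dv\,dx>0$ unless $f\equiv 0$, while ``neutral'' means $Q=0$, so it suffices to rule out nontrivial steady solutions with $Q\neq 0$ and to show a steady solution with $Q=0$ is trivial. For a steady solution the last two equations of (\ref{E1.1}) give $\partial_x E_2=0$ and $\partial_x B=-j_2$; since $E_2$ is compactly supported, $E_2\equiv 0$, and the characteristic system (\ref{E1.2}) becomes \emph{autonomous}, $\dot X^\alpha=\hat V^\alpha_1$, $\dot V^\alpha_1=e^\alpha(E_1(X^\alpha)+\hat V^\alpha_2 B(X^\alpha))$, $\dot V^\alpha_2=-e^\alpha\hat V^\alpha_1 B(X^\alpha)$, while $\partial_x E_1=\rho$ by the definition of $E_1$.

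Multiplying the steady Vlasov equation by $v_1$, integrating in $v$, summing over $\alpha$, and integrating the momentum derivatives by parts, the terms carrying $B$ collect into $B\,\underset{\alpha}{\dsum} e^\alpha\!\int v_1(\partial_{v_2}\hat v^\alpha_1-\partial_{v_1}\hat v^\alpha_2)f^\alpha\,dv$, which vanishes because $\partial_{v_2}\hat v^\alpha_1=\partial_{v_1}\hat v^\alpha_2$; what survives is
\begin{equation*}
\partial_x\!\left(\dint \underset{\alpha}{\dsum} f^\alpha v_1\hat v^\alpha_1\,dv\right)=E_1\rho+Bj_2=E_1\,\partial_x E_1-B\,\partial_x B .
\end{equation*}
Thus $\dint \underset{\alpha}{\dsum} f^\alpha v_1\hat v^\alpha_1\,dv-\tfrac12 E_1^2+\tfrac12 B^2$ is constant in $x$; letting $x\to+\infty$, where $f^\alpha$ and $B$ vanish and $E_1\to\tfrac12 Q$, this constant equals $-\tfrac18 Q^2$, so
\begin{equation*}
\dint \underset{\alpha}{\dsum} f^\alpha v_1\hat v^\alpha_1\,dv=\tfrac12 E_1^2-\tfrac12 B^2-\tfrac18 Q^2\qquad\text{for all }x.\tag{I}
\end{equation*}
Since $v_1\hat v^\alpha_1=v_1^2/\sqrt{(m^\alpha)^2+|v|^2}\geq 0$, the left side of (I) is nonnegative. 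If $Q\neq 0$, then $E_1$ is continuous with $E_1(\mp\infty)=\mp\tfrac12 Q$, so $E_1(x_*)=0$ for some $x_*$, and (I) at $x_*$ reads $0\leq\dint\underset{\alpha}{\dsum} f^\alpha v_1\hat v^\alpha_1\,dv=-\tfrac12 B(x_*)^2-\tfrac18 Q^2<0$, a contradiction. This already disposes of the monocharge case.

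Now suppose $Q=0$. Then (I) gives $|E_1(x)|\geq|B(x)|$ for all $x$. If some $f^{\alpha_0}\not\equiv 0$, the open set $\{f^{\alpha_0}>0\}$ contains a point $(x_0,v^0)$ with $v^0_1\neq 0$; set $c:=f^{\alpha_0}(x_0,v^0)>0$. Because $E_2\equiv 0$, the autonomous flow conserves the energy $\sqrt{(m^{\alpha_0})^2+|V^{\alpha_0}|^2}+e^{\alpha_0}\phi(X^{\alpha_0})$ and the canonical momentum $V^{\alpha_0}_2+e^{\alpha_0}\psi(X^{\alpha_0})$, with $\phi'=-E_1$, $\psi'=B$; along the characteristic through $(x_0,v^0)$ these equal constants $h,p$, and $f^{\alpha_0}$ is constantly $c$, so the orbit lies in the compact set $\mathrm{supp}\,f^{\alpha_0}$, hence is bounded, and $(V^{\alpha_0}_1(s))^2=(h-e^{\alpha_0}\phi(X^{\alpha_0}(s)))^2-(m^{\alpha_0})^2-(p-e^{\alpha_0}\psi(X^{\alpha_0}(s)))^2=:W(X^{\alpha_0}(s))$. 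Since $W$ tends to $0$ at both ends of the bounded interval swept by $X^{\alpha_0}$ (where $\dot X^{\alpha_0}=\hat V^{\alpha_0}_1\to 0$, hence $V^{\alpha_0}_1\to 0$) while $W(x_0)=(v^0_1)^2>0$ in its interior, $\sup_s (V^{\alpha_0}_1)^2>0$ is attained at some time $\hat s$; let $(\hat x,\hat u)$ be the state there, so $\hat u_1\neq 0$ and $\tfrac{d}{ds}(V^{\alpha_0}_1)^2=0$ at $\hat s$, i.e. $\dot V^{\alpha_0}_1(\hat s)=0$, which says $E_1(\hat x)=-\bigl(\hat u_2/\sqrt{(m^{\alpha_0})^2+|\hat u|^2}\bigr)B(\hat x)$. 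As $|\hat u_2|/\sqrt{(m^{\alpha_0})^2+|\hat u|^2}<1$, this is incompatible with $|E_1(\hat x)|\geq|B(\hat x)|$ unless $B(\hat x)=0$, and then also $E_1(\hat x)=0$. Evaluating (I) (with $Q=0$) at $\hat x$ gives $\dint\underset{\alpha}{\dsum} f^\alpha(\hat x,v)\,v_1\hat v^\alpha_1\,dv=0$, so each nonnegative summand vanishes, forcing $f^{\alpha_0}(\hat x,v)=0$ whenever $v_1\neq 0$; but $f^{\alpha_0}(\hat x,\hat u)=c>0$ with $\hat u_1\neq 0$ — a contradiction. Hence all $f^\alpha\equiv 0$, so $\rho\equiv 0$, $j\equiv 0$, $E_1\equiv 0$, and (from $\partial_x B=-j_2=0$ and compact support) $B\equiv 0$: the solution is trivial.

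The neutral case is the delicate one: there is no point at which $E_1$ is forced to vanish, so such a point must be produced dynamically along the flow, and the key technical step — where both conservation laws of the reduced planar Hamiltonian system are used — is that the supremum of $|V^{\alpha_0}_1|$ along a characteristic contained in $\mathrm{supp}\,f^{\alpha_0}$ is actually attained. Granting that, the pointwise inequality $|E_1|\geq|B|$ extracted from (I) finishes the proof.
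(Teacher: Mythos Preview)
Your proof is correct, but both halves take a genuinely different route from the paper.

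\textbf{Monocharge case.} The paper does \emph{not} use the momentum--flux identity here; it uses a time--dependent dilation identity,
\[
\dfrac{d}{dt}\left(\diint fxv_1\,dv\,dx+\dint xE_2B\,dx\right)=\diint fv_1\hat v_1\,dv\,dx+\dint x\rho E_1\,dx+\tfrac12\dint(B^2+E_2^2)\,dx,
\]
and checks that the right side is strictly positive for nontrivial $f$, so steadiness is impossible. Your approach is cleaner: you extend the steady identity (I) to the charged case by tracking the constant $-\tfrac18 Q^2$, then use the intermediate value theorem on $E_1$ (running from $-\tfrac12 Q$ to $+\tfrac12 Q$) to produce a zero $x_*$, at which (I) yields an immediate sign contradiction. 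This avoids the dilation computation and the separate verification that $\dint x\rho E_1\,dx\geq 0$.

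\textbf{Neutral case.} Both arguments rest on the same identity (your (I), the paper's (6.1)) and its consequence $|B|\leq|E_1|$. From there the dynamics differ. The paper locates a maximal interval $(a,b)$ on which $E_1\neq 0$, picks a characteristic with $f^\alpha>0$ starting inside, and uses $|B|\leq E_1$ to get $\dot V_1\geq 0$, so $V_1$ stays bounded below and the characteristic reaches the endpoint $b$ in finite time --- where $E_1(b)=0$ forces $f^\alpha(b,\cdot)\equiv 0$, a contradiction. You instead confine a characteristic inside $\mathrm{supp}\,f^{\alpha_0}$ by compactness and look for the maximum of $V_1^2$; at that point $\dot V_1=0$ gives $E_1=-\hat V_2 B$, which together with $|E_1|\geq|B|$ and $|\hat V_2|<1$ forces $E_1=B=0$, again contradicting $f^{\alpha_0}>0$ there. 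The paper pushes the orbit \emph{out} to a known zero of $E_1$; you manufacture the zero \emph{on} the orbit.

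Your attainment step is the one place that is thinner than it should be. The assertion ``$W\to 0$ at both ends of the interval swept by $X$'' is correct but not for the reason you give in one breath; the clean justification is: if $b=\sup_s X(s)$ is attained at some $\hat s$ then $\dot X(\hat s)=0$, hence $W(b)=V_1(\hat s)^2=0$; if not, any limit point $(b,v^*)$ lies in the (invariant) orbit closure, and $v_1^*\neq 0$ would push $X$ past $b$, so $v_1^*=0$ and $W(b)=0$. Since $W(x_0)>0$, the maximum of $W$ on $[a,b]$ is positive and lies in $(a,b)$, which is contained in the (connected) range of $X$, so it is hit at some finite time $\hat s$. With that filled in, the argument is complete.
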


The global existence in time of smooth solutions to (\ref{E1.1})
is shown in \cite{9} when a neutralizing background density is
included.  Adaptation of the essential estimate from \cite{9} to
the current situation is briefly discussed in Section 2.  Global
existence has been shown in two dimensions, \cite{11}, and two and
one-half dimensions, \cite{10}, but is open for large data in three
dimensions. Some time decay is known for the classical Vlasov
Poisson system in three dimensions (\cite{12}, \cite{14},
\cite{15}). Additionally there are time decay results for the
classical Vlasov Poisson system in one dimension (\cite{1},
\cite{2}, \cite{7}, \cite{17}). For decay results on the
relativistic  Vlasov Poisson system, see \cite{7} and \cite{13}.
References \cite{3}, \cite{4}, and \cite{5} are also mentioned
since they deal with time dependent rescalings and time decay for
other kinetic equations.  We also cite \cite{6} and \cite{16} as
general references on mathematical kinetic theory.

A main point to this article is that the non-decay stated in
Theorem 1.3 is in marked contrast to the decay found in \cite{1},
\cite{2}, and \cite{17}.  In \cite{1}, \cite{2} and \cite{17} the
problem studied is non-relativistic.  Hence, there is no apriori
upper bound on particle speed and this leads to dispersion.  In
this paper (and also \cite{8}) particle speeds are bounded by the
speed of light and this limits the dispersion.

\section{Conservation Laws}

Define

$$
\begin{array}{rcl}
e & = & \dint \underset{\alpha}{\dsum} f^{\alpha}
\sqrt{(m^{\alpha})^2 + |v|^2} dv + \dfrac{1}{2} |E|^2 +
\dfrac{1}{2} B^2,\\
\\
m & = & \dint \underset{\alpha}{\dsum} f^{\alpha}v_1 dv + E_2B,
\end{array}
$$

\noindent and

$$
\ell = \dint \underset{\alpha}{\dsum} f^{\alpha} v_1
\hat{v}^{\alpha}_1 dv - \dfrac{1}{2} E^2_1 + \dfrac{1}{2}E^2_2 +
\dfrac{1}{2} B^2.
$$

\noindent A short computation reveals that

\be \partial_t e + \partial_x m = 0 \label{E2.1} \ee

\noindent and

\be \partial_t m + \partial_x \ell = 0. \label{E2.2} \ee

Using (\ref{E2.1}), the divergence theorem yields

\be \begin{array}{rcl} 0 & = & \dint^t_0
\dint^{x+t-\tau}_{x-t+\tau} \left(\partial_{\tau} e + \partial_y m
\right) dy\ d\tau\\
\\
& = & \left. \dint^t_0 (e + m) \right|_{(\tau,x+t-\tau)}  d\tau +
\left. \dint^t_0 (e-m) \right|_{(\tau,x-t+\tau)} \ d \tau\\
\\
&& - \dint ^{x+t}_{x-t} e(0,y) dy.
\end{array} \label{E2.3}
\ee

\noindent Note that

$$
\begin{array}{rcl}
e\pm m & = & \dint \underset{\alpha}{\dsum} f^{\alpha}
\left(\sqrt{(m^{\alpha})^2 + |v|^2} \pm v_1\right) dv +
\dfrac{1}{2} E^2_1 \\
\\
&& + \dfrac{1}{2} (E_2 \pm B)^2 \geq 0
\end{array}
$$

\noindent and that

$$
\begin{array}{rcl}
|j_2| & \leq & \dint \underset{\alpha}{\dsum} f^{\alpha}
\dfrac{|v_2|}{\sqrt{(m^{\alpha})^2 + |v|^2}} \ dv\\
\\
& \leq & C\dint \underset{\alpha}{\dsum} f^{\alpha} \left(
\sqrt{(m^{\alpha})^2+|v|^2} \pm v_1\right)\ dv.
\end{array}
$$

\noindent In the neutral case (\ref{E2.3}) yields

\be
\begin{array}{rcl}
C & \geq &\left. \dint^{x+t}_{x-t} e(0,y)dy = \dint^t_0
(e+m)\right|_{(\tau,x+t-\tau)}\ d\tau\\
\\
&& \left. + \dint^t_0 (e-m)\right|_{(\tau,x-t+\tau)}\ d\tau.
\end{array} \label{E2.4}
\ee

\noindent In the monocharge case, since $E_1$ is not compactly
supported, (\ref{E2.3}) only yields

$$
\begin{array}{rcl}
Ct & \geq & \left.\dint^t_0 (e+m)\right|_{(\tau, x+t-\tau)}\ d
\tau \\
\\
&& \left.+ \dint^t_0 (e-m) \right|_{(\tau,x-t+\tau)} \ d\tau.
\end{array}
$$

\noindent It follows that

\be\begin{array}{rcl} |E_2| + |B| & \leq & C+C\dint^t_0 \left| j_2
(\tau, x+t-\tau)\right| d\tau\\
\\
&& \ \ + C\dint^t_0 \left| j_2 (\tau, x-t+\tau)\right| d\tau\\
\\
& \leq & C + Ct^p
\end{array} \label{E2.5}
\ee

\noindent where $p=0$ in the neutral case and $p=1$ in the
monocharge case.  Global existence of smooth solutions follows in
both cases as in \cite{9}.

Consider the monocharge case now.  Bounds independent of $t$ may
be obtained by also using (\ref{E2.2}).  For $x_0 < C_0$ the
divergence theorem yields

$$\begin{array}{rcl}
0 & = & \dint^t_0 \dint^{C_0+\tau}_{x_0 + \tau} \left[
\partial_{\tau} (e-m) + \partial_y (m-\ell) \right] dy\ d\tau\\
\\
& = & \dint^t_0 \left[ e-2m + \ell\right]\left|_{(\tau,x_0+\tau)}
d\tau + \dint^{C_0+t}_{x_0+t} (e-m)\right|_{(t,y)}  dy \\
\\
&& - \dint^t_0 [e-2m+ \ell]\left|_{(\tau,C_0 + \tau)} d\tau -
\dint^{C_0}_{x_0} (e-m) \right|_{(0,y)} dy.
\end{array}
$$

\noindent Note that

$$
e-2 m + \ell = \dint \underset{\alpha}{\dsum} f^{\alpha}
\dfrac{(\sqrt{(m^{\alpha})^2 + |v|^2}-v_1)^2}{\sqrt{(m^{\alpha})^2
+ |v|^2}} dv + (E_2-B)^2
$$

\noindent is nonnegative and vanishes on $y = C_0 + \tau$ (since
$E_1$ canceled).  Hence,

\be \begin{array}{rcl} C(C_0-x_0) & \geq & \left.\dint^{C_0}_{x_0}
(e-m)\right|_{(0,y)} dy \\
\\
& = & \left.\dint^t_0 (e-2m+\ell)\right|_{(\tau,x_0+\tau)} d\tau +
\left.\dint^{C_0 + t}_{x_0+t} (e-m) \right|_{(t,y)} dy.
\end{array}
\label{E2.6} \ee

\noindent Similarly

$$
e+2m+\ell = \dint \underset{\alpha}{\dsum} f^{\alpha}
\dfrac{(\sqrt{(m^{\alpha})^2 +|v|^2}+v_1)^2}{\sqrt{(m^{\alpha})^2
+ |v|^2}} dv + (E_2 + B)^2
$$

\noindent is nonnegative and vanishes on $y = -C_0 - \tau$ and

$$
0 = \dint^t_0 \dint^{x_0-\tau}_{-C_0 - \tau} \left[
\partial_{\tau} (e+m) + \partial_y(m+\ell)\right] dy\ d\tau
$$

\noindent leads to

\be \begin{array}{rcl} C(x_0+C_0) & \geq &
\left.\dint^{x_0}_{-C_0} (e+m) \right|_{(0,y)} dy \\
\\
& = &\left. \dint^t_0 (e+2m + \ell)\right|_{(\tau,x_0 - \tau)}
d\tau + \left.\dint^{x_0-t}_{-C_0 -t} (e+m)\right|_{(t,y)} dy
\end{array}
\label{E2.7} \ee

\noindent for $x_0 > -C_0$.  Theorem 1.1 now follows from
(\ref{E2.4}), (\ref{E2.6}), and (\ref{E2.7}).

\section{Bounds on $v_2$ Support}

Define

$$
A(t,x) = \dint^x_{-\infty} B(t,y)dy
$$

\noindent and note that

$$\begin{array}{rcl}\partial_tA+\partial_x A & = & -(E_2-B),\\
\\
\partial_t A - \partial_x A & = & - (E_2+B),
\end{array}
$$

\noindent so

\be \begin{array}{rcl} A(t,x) & = & \left. A(0,x-t) -
\dint^t_0(E_2-B)\right|_{(\tau, x-t+\tau)} d\tau\\
\\
& = & \left.A(0,x+t) - \dint^t_0 (E_2+B)\right|_{(\tau, x + t -
\tau)} d\tau.\end{array} \label{E3.1} \ee

\noindent For $|x| \geq C_0 + t, |A(t,x)| = |A(0,x-t)| \leq C$, so
consider $ |x| < C_0 + t$.  Then (\ref{E3.1}) becomes

$$
\begin{array}{rcl}  A(t,x) & = & \left.A(0,x-t) -
\dint^t_{\max(0,\frac{t-x-C_0}{2})} (E_2-B) \right|_{(\tau, x-t+
\tau)} d\tau\\
\\
& = & \left.A(0,x+t) - \dint^t_{\max(0,\frac{x+t-C_0}{2})} (E_2 +
B) \right|_{(\tau, x+t - \tau)}  d\tau. \end{array}
$$

\noindent In the neutral case, (\ref{E1.3}) and the Cauchy
Schwartz inequality yield

$$\begin{array}{rcl}
|A(t,x)| & \leq & C + \sqrt{t-\max(0,\frac{t-x-C_0}{2})}
\sqrt{C}\\
\\
& \leq & C + C \sqrt{t+x+C_0}
\end{array}
$$

\noindent and

$$
\begin{array}{rcl}
|A(t,x)| & \leq & C + \sqrt{t-\max (0,\frac{x+t-C_0}{2})}
\sqrt{C}\\
\\
& \leq & C + C \sqrt{t-x+C_0}.
\end{array}
$$

\noindent Hence

\be |A(t,x)| \leq C + C \sqrt{t-|x|+C_0}\label{E3.2} \ee

\noindent follows.  For the monocharge case, (\ref{E1.4}) is used
in place of (\ref{E1.3}) to obtain

\be \begin{array}{rcl} |A(t,x)| & \leq & C + \sqrt{t-\max
(0,\frac{t-x-C_0}{2})} \sqrt{C(C_0 + t - x)}\\
\\
 & \leq & C + C \sqrt{(t+C_0)^2 - x^2}.
 \end{array}\label{E3.3}
 \ee

 From (\ref{E1.2}) we have

 $$
 f^{\alpha}\left(s,X^{\alpha}(s,t,x,v),V^{\alpha}(s,t,x,v)\right)
 = f^{\alpha}(t,x,v)
 $$

 \noindent and

 $$
 V^{\alpha}_2 (s,t,x,v) + e^{\alpha}A(s,X^{\alpha}(s,t,x,v)) = v_2
 + e^{\alpha}A(t,x)
 $$

 \noindent for all $s,t,x,v$.  If $f^{\alpha}(t,x,v)\neq 0$ then

 $$
\begin{array}{rcl} \left|v_2 + e^{\alpha} A(t,x)\right| & = &
\left|V^{\alpha}_2(0,t,x,v) +e^{\alpha}A(0,X^{\alpha}(0,t,x,v)) \right|\\
\\
& \leq & C.
\end{array}
$$

\noindent In the neutral case, (\ref{E3.2}) yields

\be |v_2| \leq C+C \sqrt{t-|x|+C_0}. \label{E3.4} \ee

\noindent In the monocharge case, (\ref{E3.3}) yields

\be |v_2| \leq C+C \sqrt{(t+C_0)^2 - x^2} \label{E3.5} \ee

\noindent on the support of $f^{\alpha}$.

Theorem 1.2 follows from (\ref{E3.4}) and (\ref{E3.5}) but we make
one further observation.  On the support of $f^{\alpha}$

$$
|v_2 +e^{\alpha}A(t,x)| \leq C
$$

\noindent so $v_2 \in (-e^{\alpha}A(t,x)-C,-e^{\alpha}A(t,x)+C)$.
Thus the $v_2$ support has bounded measure.

\section{Non-decay of $\rho$ in the Monocharge Case}

In this section only the monocharge case is considered.  Let

$$M = \dint \rho(t,x)dx
$$

\noindent and note that

$$
E_1 = \dfrac{1}{2} \dint^x_{-\infty} \rho \, dy -
\dfrac{1}{2}\dint^{\infty}_x \rho\, dy = \dfrac{1}{2} M -
\dint^{C_0+t}_x \rho\, dy.
$$

\noindent For some $x_0 \in (-C_0, C_0)$ define

$$
\mu(t) = \dint^{C_0+t}_{x_0+t} \rho(t,y)dy
$$

\noindent and

$$
\left. {\cal E}(t) = \dint^{C_0+t}_{x_0+t} (e-m)\right|_{(t,y)}
dy.
$$

\noindent Then

$$\mu^{\prime}(t)  = j_1(t,x_0 + t) - \rho (t,x_0+t) \leq 0
$$

\noindent and by (\ref{E2.1}) and (\ref{E2.2})

$$
\begin{array}{rcl}
{\cal E}^{\prime}(t) & = &\left. \left.  -\dint^{C_0+t}_{x_0+t}
\partial_y (m-\ell)dy + (e-m)\right|_{(t,C_0+t)} -
(e-m)\right|_{(t,x_0 + t)}\\
\\
&=& (e-2m+\ell)\left|_{(t,C_0+t)} - (e-2m + \ell)
\right|_{(t,x_0+t)}\\
\\
& = & \left.-(e-2m+\ell)\right|_{(t,x_0+t)} \leq 0.
\end{array}
$$

\noindent Suppose that

\be \dfrac{1}{2}M \geq \mu(0) \label{E4.1} \ee

\noindent and

\be \dfrac{1}{2} (C_0 - x_0) (\dfrac{1}{2}M)^2 > {\cal E}(0).
\label{E4.2} \ee

\noindent Then for $y \geq x_0 + t$,

$$
\begin{array}{rcl}
E_1(t,y) \geq E_1(t,x_0 + t) & = & \dfrac{1}{2}M - \mu (t) \\
\\
& \geq & \dfrac{1}{2}M - \mu(0) \geq 0
\end{array}
$$

\noindent so

$$
\begin{array}{rcl}
{\cal E}(0) & \geq & {\cal E}(t) \geq
\dfrac{1}{2}\dint^{C_0+t}_{x_0+t} E^2_1 dy\\
\\
& \geq & \dfrac{1}{2} (C_0 - x_0) ( \dfrac{1}{2}M - \mu(t))^2
\end{array}
$$

\noindent and hence

$$
\sqrt{\dfrac{2{\cal E}(0)}{C_0-x_0}} \geq \dfrac{1}{2} M - \mu (t)
$$

\noindent and

$$
\dint^{C_0+t}_{x_0+t}  \rho\, dy = \mu (t) \geq  \dfrac{1}{2} M -
\sqrt{\dfrac{2{\cal E}(0)}{C_0-x_0}} > 0.
$$

\noindent Hence Theorem 1.3 follows from (\ref{E4.1}) and
(\ref{E4.2}).

To see that there are initial conditions for which (\ref{E4.1})
and (\ref{E4.2})  hold consider the following:  Let $f^L_0, f^R_0
\in C^1_0 (\BR^3)$ be nonnegative and compactly supported with

$$
\begin{array}{lcl}  f^{L}_0 (x,v) =0 & {\rm if} & x \geq -1,\\
\\
f^R_0 (x,v) = 0 & {\rm if} & x \notin (-1,0),
\end{array}
$$

\noindent and

$$
\dfrac{1}{2} \diint f^L_0 dv\, dx \geq \diint f^R_0 dv\, dx > 0.
$$

\noindent Let

$$C_0 = \sup \left\{ |x| : f^L_0 (x,v) \neq 0 \ {\rm for\ some}\
v\right\}
$$

\noindent and

$$
f(0,x,v) = f^L_0 (x,v) + f^R_0 (x-C_0,v_1-W,v_2)
$$

\noindent for $W>1$.  Taking $x_0 = -1$ we have

$$
\mu(0) = \diint f^R_0 dv\, dx \leq \dfrac{1}{2}M,
$$

\noindent which is (\ref{E4.1}).  Taking

$$
E_2(0,y) = B(0,y) = 0$$

\noindent (and using $x_0 = -1$) we have

$$\begin{array}{rcl}
{\cal E}(0) & = & \left.\dint^{C_0}_{x_0} \left[ \dint f (
\sqrt{1+|v|^2}-v_1) dv + \dfrac{1}{2}E^2_1\right]\right|_{(0,y)}
dy \\
\\
& = & \dint^{C_0}_{x_0} \left[ \dint f^R_0 (y-C_0, v_1 - W,v_2)
\dfrac{1+v^2_2}{\sqrt{1+|v|^2} + v_1} dv + \dfrac{1}{2}E^2_1
\right] dy \\
\\
& \leq & \dfrac{C}{W} + \dfrac{1}{2} \dint^{C_0-1}_{x_0} \left(
\dfrac{1}{2}M  - \mu(0)\right)^2 dy + \dfrac{1}{2}
\dint^{C_0}_{C_0-1} (\dfrac{1}{2}M)^2 dy \\
\\
& = & \dfrac{C}{W} + \dfrac{C_0 -1-x_0}{2} \left( \dfrac{M^2}{4} -
M \mu(0) + \mu^2(0) \right) + \dfrac{1}{8} M^2 \\
\\
& = & \dfrac{C}{W} + \dfrac{C_0}{2} \left(\dfrac{M^2}{4} - M\mu(0)
+ \mu^2(0)\right) - \dfrac{x_0}{8} M^2\\
\\
& = & \dfrac{C}{W} + \dfrac{C_0-x_0}{8} M^2 - \dfrac{C_0}{2}
\mu(0) (M-\mu(0)).
\end{array}
$$

\noindent Now taking $W$ sufficiently large yields (\ref{E4.2})
completing the proof.

\section{Bounds on $v_1$ Support in the Neutral Case}

In this section we consider only the neutral case.  Define

$$k= \dint \underset{\alpha}{\dsum}
f^{\alpha}\sqrt{(m^{\alpha})^2+|v|^2} dv
$$

\noindent and

$$
\sigma _{\pm}  = \dint \underset{\alpha}{\dsum} f^{\alpha} \left(
\sqrt{(m^{\alpha})^2 + |v|^2}\pm v_1\right) dv.
$$

\noindent Then (\ref{E2.1}) yields

$$
\dint kdx\leq \dint edx = \dint e(0,x)dx = C.$$

\noindent Also (\ref{E1.3}) yields

$$
\dint^t_0 \left[ \sigma _- (\tau,x-t + \tau) + \sigma_+
(\tau,x+t-\tau)\right]  d\tau \leq C.
$$

\noindent These bounds are used in the following:

\begin{lemma} For all $t \geq 0$ and $x \in \BR$

\be \dint \underset{\alpha}{\dsum}
f^{\alpha} dv \leq C\sqrt{k\sigma_-} \label{E5.1} \ee

\noindent and

\be \dint \underset{\alpha}{\dsum}
f^{\alpha} dv \leq C\sqrt{k\sigma_+}.\label{E5.2} \ee
\end{lemma}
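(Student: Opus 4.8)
\section*{Proof proposal for Lemma 5.1}

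The plan is to derive both inequalities from a single Cauchy--Schwarz splitting. Write $s^{\alpha} := \sqrt{(m^{\alpha})^2 + |v|^2}$, and note that $s^{\alpha} - v_1 > 0$ everywhere, since $s^{\alpha} \ge \sqrt{(m^{\alpha})^2 + v_1^2} > |v_1| \ge v_1$ (here $m^{\alpha} > 0$). Factor $f^{\alpha} = \big(f^{\alpha}(s^{\alpha}-v_1)^{-1}\big)^{1/2}\big(f^{\alpha}(s^{\alpha}-v_1)\big)^{1/2}$ and apply the Cauchy--Schwarz inequality to the measure $\sum_{\alpha} f^{\alpha}\,dv$ on $\{1,\dots,N\}\times\BR^2$:
$$
\dint \underset{\alpha}{\dsum} f^{\alpha}\, dv \;\le\; \left(\dint \underset{\alpha}{\dsum} \dfrac{f^{\alpha}}{s^{\alpha} - v_1}\, dv\right)^{1/2}\left(\dint \underset{\alpha}{\dsum} f^{\alpha} (s^{\alpha} - v_1)\, dv\right)^{1/2}.
$$
The second factor is exactly $\sigma_-^{1/2}$, so it remains only to bound the first factor by $C\sqrt{k}$.

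For that I would use the elementary identity $(s^{\alpha} - v_1)(s^{\alpha} + v_1) = (s^{\alpha})^2 - v_1^2 = (m^{\alpha})^2 + v_2^2$, which gives
$$
\dfrac{1}{s^{\alpha} - v_1} = \dfrac{s^{\alpha} + v_1}{(m^{\alpha})^2 + v_2^2} \;\le\; \dfrac{s^{\alpha} + v_1}{(m^{\alpha})^2} \;\le\; \dfrac{2 s^{\alpha}}{(m^{\alpha})^2} \;\le\; C\, s^{\alpha},
$$
where I used $v_2^2 \ge 0$, then $v_1 \le |v| \le s^{\alpha}$, and finally that the masses $m^{\alpha}$ are finitely many fixed positive numbers, so $\max_{\alpha}(m^{\alpha})^{-2}$ is a constant of the admissible type. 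Integrating, $\dint \sum_{\alpha} f^{\alpha}(s^{\alpha}-v_1)^{-1}\,dv \le C\dint \sum_{\alpha} f^{\alpha} s^{\alpha}\, dv = C k$, and combining with the displayed Cauchy--Schwarz bound yields (\ref{E5.1}). Inequality (\ref{E5.2}) follows in exactly the same way: factor out $(s^{\alpha}+v_1)^{\pm 1/2}$ instead, and use the same identity with $v_1$ replaced by $-v_1$ (again $s^{\alpha}+v_1>0$).

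There is no real obstacle here; the only points needing attention are the strict positivity of $s^{\alpha}\mp v_1$ (immediate from $s^{\alpha}\ge|v|$, and needed so that the Cauchy--Schwarz weights are well defined) and the observation that the constant absorbed into ``$\le C$'' depends only on the fixed masses and not on $t,x,v$, which is precisely the convention for $C$ fixed in the introduction. I note finally that $k$ and $\sigma_{\pm}$ here are all evaluated at the same point $(t,x)$, so (\ref{E5.1})--(\ref{E5.2}) are genuinely pointwise in $(t,x)$; the global bounds on $\dint k\,dx$ and on $\dint_0^t[\sigma_-(\tau,x-t+\tau)+\sigma_+(\tau,x+t-\tau)]\,d\tau$ recorded just before the lemma are not used in its proof, but are what make the lemma effective in the $v_1$-support estimate that follows.
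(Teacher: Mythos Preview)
Your proof is correct, but it takes a different route from the paper. The paper argues by a level-set splitting: for any $R\geq 0$, write $\int\sum_\alpha f^\alpha\,dv \leq \int_{|v|\leq R}\sum_\alpha f^\alpha\,dv + Ck/\sqrt{1+R^2}$, then on $|v|\leq R$ use the lower bound $s^\alpha - v_1 \geq (m^\alpha)^2/(2s^\alpha) \geq C/\sqrt{1+R^2}$ to dominate the first term by $C\sqrt{1+R^2}\,\sigma_-$, and finally optimize in $R$ (taking $\sqrt{1+R^2}=\sqrt{k/\sigma_-}$ when $0<\sigma_-\leq k$, with separate handling of the degenerate cases $\sigma_-=0$ and $\sigma_->k$). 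Your weighted Cauchy--Schwarz is more direct: it replaces the split-and-optimize step by a single inequality and dispenses with the endpoint cases entirely. Both arguments rest on the same pointwise fact $1/(s^\alpha-v_1)\leq C s^\alpha$; the paper uses it as a lower bound on $s^\alpha-v_1$ to control the small-$|v|$ piece, while you use it as an upper bound on the reciprocal to control the Cauchy--Schwarz weight. Either way the constant depends only on $\min_\alpha m^\alpha$, as you note.
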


\begin{proof} We will show (\ref{E5.1}), the proof of (\ref{E5.2})
is similar.  For any $R\geq 0$

$$
\dint \underset{\alpha}{\dsum} f^{\alpha} dv \leq
\underset{|v|\leq R}{\dint} \underset{\alpha}{\dsum} f^{\alpha} \
dv + \dfrac{Ck}{\sqrt{1+R^2}}.
$$

\noindent For $|v| \leq R$,

$$
\begin{array}{rcl}
\sqrt{(m^{\alpha})^2+|v|^2}-v_1 &=& \dfrac{(m^{\alpha})^2 +
v^2_2}{\sqrt{(m^{\alpha})^2 + |v|^2} + v_1} \geq
\dfrac{(m^{\alpha})^2}{2\sqrt{(m^{\alpha})^2 +|v|^2}}\\
\\
& \geq & \dfrac{C}{\sqrt{1+R^2}}
\end{array}
$$

\noindent so

$$
\begin{array}{rcl}
\dint \underset{\alpha}{\dsum}  f^{\alpha} dv & \leq &
\underset{|v|\leq R}{\dint} \underset{\alpha}{\dsum} f^{\alpha}
C\sqrt{1+R^2} \left( \sqrt{(m^{\alpha})^2+|v|^2} - v_1 \right) dv+ \dfrac{Ck}{\sqrt{1+R^2}}
\\
 & \leq & C\sqrt{1+R^2}\sigma_- +
\dfrac{Ck}{\sqrt{1+R^2}}.
\end{array}
$$

\noindent If $0 < \sigma_- \leq k$, taking

$$
R= \sqrt{\frac{k}{\sigma_-} -1}
$$

\noindent leads to (\ref{E5.1}).

If $k < \sigma_-$ then

$$
\dint \underset{\alpha}{\dsum}  f^{\alpha} dv < Ck < C\sqrt{k\sigma_-}
$$

\noindent and if $\sigma_- = 0$ then

$$
\dint \underset{\alpha}{\dsum}  f^{\alpha} dv = \sqrt{k\sigma_-} = 0.
$$

\noindent In all cases (\ref{E5.1}) holds so the proof is
complete.
\end{proof}

Consider a characteristic

$$
(X(s),V(s)) = (X^{\alpha}(s,0,\ox,\ov),V^{\alpha}(s,0,\ox,\ov))
$$

\noindent of $f^{\alpha}$ (defined in (\ref{E1.2})) along which
$f^{\alpha}(s,X(s),V(s)) \neq 0$.
The idea to the following estimate is that as long as $V_1$ is
large, the integration

$$
\left. V_1(t) =V_1(t-\Delta) + \dint^t_{t-\Delta} e^{\alpha}
\left(E_1 + \hat{V}_2(s)B\right)\right|_{(s,X(s))}\, ds
$$

\noindent is nearly integration on a light cone and (\ref{E1.3})
can be used to obtain an improved estimate.

Define
$$
C_1 = \sup \left\{ |v_1| : \exists t \in [0,1], \ x \in \BR, v_2
\in \BR \ {\rm with}\ \underset{\alpha}{\dsum} f^{\alpha} (t,x,v) \neq 0\right\}\\
$$

\noindent and suppose that $t>0$ and

$$V_1(t) > 2C_1.
$$

\noindent Define

$$
\Delta = \sup \left\{ \tau \in (0,t]:V_1(s) \geq \dfrac{1}{2} V_1
(t)\ {\rm for \ all}\ s \in [t-\tau,t]\right\}.
$$

\noindent Note that

$$
V_1(t-\Delta) \geq \dfrac{1}{2}V_1(t) > C_1
$$

\noindent so $t-\Delta > 1$ and

\be V_1(t-\Delta) = \dfrac{1}{2}V_1(t) \label{E5.3} \ee

\noindent follows.  Define

$$X_C(s) = X(t) + s-t.$$

\noindent Using Theorem 1.2 we have

$$
\begin{array}{rcl}
\left| \dfrac{d}{ds} \left(X_C(s)-X(s)\right)\right|& =&
1-\hat{V}_1(s)\\
\\
& = & \dfrac{(m^{\alpha})^2
+V^2_2(s)}{\sqrt{(m^{\alpha})^2+|V(s)|^2}\left(\sqrt{(m^{\alpha})^2+|V(s)|^2} +
V_1(s)\right)}\\
\\
& \leq & \dfrac{C+C\left(s-|X(s)|+C_0\right)}{V^2_1(s)}.
\end{array}
$$

\noindent Since $s-|X(s)|$ is increasing, for $t-\Delta \leq s \leq t $ we
have

$$
\left|\dfrac{d}{ds} \left(X_C(s) - X(s)\right)\right| \leq
\dfrac{C+C\left(t-|X(t)|+C_0\right)}{\left(\dfrac{1}{2}V_1(t)\right)^2}
$$

\noindent and hence

\be \left|X_C(s) - X(s)\right| \leq \dfrac{C\Delta(t-|X(t)|
+2C_0)}{V^2_1(t)}. \label{E5.4} \ee

By (\ref{E1.3}), the Cauchy Schwartz inequality, and (\ref{E5.1})
we have

$$
\begin{array}{rcl}
\left|\dint_{t-\Delta}^t E_1(s,X(s))ds\right| & = & \left|
\dint^t_{t-\Delta} E_1(s,X_C(s)) ds \right. \\
\\
&& \left.+ \dint^t_{t-\Delta} \dint^{X(s)}_{X_C(s)} \dint
\underset{\alpha}{\dsum} e^{\alpha} f^{\alpha} dv\, dx\,
ds\right|\\
\\
& \leq & C\sqrt{\Delta} + \dint^t_{t-\Delta} \dint^{X(s)}_{X_C(s)}
C \sqrt{k\sigma_-} dx\, ds.
\end{array}
$$

\noindent Now

$$
\dint^t_{t-\Delta} \dint^{X(s)}_{X_C(s)} kdx\,ds \leq C\Delta
$$

\noindent and letting

$$
S(t) = \dfrac{C\Delta(t-|X(t)|+2C_0)}{V^2_1(t)},
$$

\noindent (\ref{E5.4}) and (\ref{E1.3}) yield

$$\begin{array}{rl}
& \dint^t_{t-\Delta} \dint^{X(s)}_{X_C(s)} \sigma_- dx\,ds \leq
\dint^t_{t-\Delta} \dint^{X_C(s)+S(t)}_{X_C(s)} \sigma_- dx\,ds\\
\\
= & \dint^t_{t-\Delta} \dint^{X(t)-t+S(t)}_{X(t)-t}
\sigma_-(s,y+s) dy\,ds\\
\\
= & \dint^{X(t)-t+S(t)}_{X(t)-t} \dint^t_{t-\Delta} \sigma_-
(s,y+s)ds\, dy\\
\\
\leq & CS(t).
\end{array}
$$

\noindent Hence the Cauchy Schwartz inequality yields

$$
\dint^t_{t-\Delta} \dint^{X(s)}_{X_C(s)} C \sqrt{k\sigma_-}\,
dx\,ds \leq C\sqrt{\Delta S(t)}
$$

\noindent and hence

\be \left|\dint^t_{t-\Delta} E_1(s,X(s))ds \right| \leq
C\sqrt{\Delta} + C\sqrt{\Delta S(t)}. \label{E5.5} \ee

Next consider

$$
\dint^t_{t-\Delta} \hat{V}_2 (s) B(s,X(s)) ds.
$$

\noindent Using Theorem 1.2 we have, for $t-\Delta \leq s \leq t$,

$$
\begin{array}{rcl}
\left|\hat{V}_2(s)\right| & \leq & C\dfrac{|V_2(s)|}{V_1(s)} \leq
\dfrac{C+C\sqrt{s-|X(s)| + C_0}}{V_1(s)} \\
\\
& \leq & \dfrac{C\sqrt{t-|X(t)| + 2C_0}}{\frac{1}{2}V_1(t)}.
\end{array}
$$

\noindent Hence by (\ref{E2.5})

\be \dint^t_{t-\Delta} \left|\hat{V}_2 (s) B(s,X(s))\right|\,ds
\leq \dfrac{C\Delta \sqrt{t-|X(t)|+2C_0}}{V_1(t)} . \label{E5.6}
\ee

Collecting (\ref{E5.5}) and (\ref{E5.6}) yields

$$
\begin{array}{rcl}
V_1(t) & = & \left.V_1(t-\Delta) + \dint^t_{t-\Delta} e^{\alpha}
\left(E_1 + \hat{V}_2 (s)B\right)\right|_{(s,X(s))} \,ds\\
\\
& \leq & V_1(t-\Delta) + C\sqrt{\Delta} + C\dfrac{\Delta
\sqrt{t-|X(t)|+2C_0}}{V_1(t)}
\end{array}
$$

\noindent and with (\ref{E5.3}) this becomes

$$
V_1(t) \leq C\sqrt{\Delta} +
C\dfrac{\Delta\sqrt{t-|X(t)|+2C_0}}{V_1(t)}.
$$

\noindent Hence

$$
\begin{array}{rcl}
V^2_1 (t) - C\sqrt{\Delta} V_1(t) & \leq &
C\Delta\sqrt{t-|X(t)|+2C_0},\\
\\
\left(V_1(t)-\dfrac{C\sqrt{\Delta}}{2}\right)^2 & \leq & \Delta
\left(C\sqrt{t-|X(t)|+2C_0} + \dfrac{C^2}{4}\right),
\end{array}
$$

\noindent and

$$\begin{array}{rcl}V_1 (t) &\leq &\dfrac{C\sqrt{\Delta}}{2} + \sqrt{\Delta
\left(C\sqrt{t-|X(t)|+2C_0} + \dfrac{C^2}{4}\right)}\\
\\
& \leq &
C\sqrt{\Delta}\left(1+(t-|X(t)|+2C_0)^{\frac{1}{4}}\right)\\
\\
& \leq & Ct^{\frac{1}{2}}(t-|X(t)|+2C_0)^{\frac{1}{4}}.
\end{array}
$$

Similar estimates may be derived if $V_1(t) < -2C_1$ so

$$
|V_1(t)| \leq 2C_1 + C
t^{\frac{1}{2}}(t-|X(t)|+2C_0)^{\frac{1}{4}}
$$

\noindent in all cases.  Theorem 1.4 follows.

\section{Nonexistence of Steady States}

Consider the monocharge case first.  The dilation identity is

$$
\begin{array}{rl}
& \dfrac{d}{dt} \left( \diint fxv_1 dv\, dx + \dint x
E_2Bdx\right)\\
\\
= & \diint f\left(v_1\hat{v}_1 + x\left(E_1 +\hat{v}_2
B\right)\right)
dv\, dx\\
\\
& + \dint x \left[ \left( -\partial_x B-j_2\right)
B+E_2\left(-\partial_x E_2\right)\right] dx\\
\\
 =& \diint fv_1\hat{v}_1 dv\, dx + \dint x\left(\rho E_1 + j_2 B\right)
dx\\
\\
& - \dint x \left[\partial_x \left(\dfrac{B^2+E^2_2}{2}\right) +
j_2B\right] dx\\
\\
= & \diint  fv_1\hat{v}_1 dv\, dx + \dint x\rho E_1 dx +
\dfrac{1}{2} \dint \left(B^2+E^2_2\right) dx.
\end{array}
$$

\noindent Let

$$M = \diint fdv\,dx$$

\noindent then for $R > C_0 +t$ we have

$$
\dfrac{-M}{2} = E_1(t,-R)\leq E_1(t,x) \leq E_1(t,R) = \dfrac{M}{2}
$$

\noindent for all $x$.  Hence

$$
\begin{array}{rcl}
 \dint x\rho E_1 dx & =& \dfrac{1}{2} \dint^R_{-R} x \partial_x
 E^2_1 dx \\
 \\
 & = & \dfrac{1}{2} \left(R\left(\dfrac{M}{2}\right)^2 -
 \left(-R\right) \left(\dfrac{M}{2}\right)^2 - \dint^R_{-R} E^2_1
 dx \right) \\
 \\
 & = & \dfrac{1}{2} \dint^R_{-R} \left(
 \left(\dfrac{M}{2}\right)^2 - E^2_1 \right) dx \geq 0.
 \end{array}
 $$

 \noindent Hence, for $f$ not identically zero,

 $$
 \dfrac{d}{dt} \left( \diint f x v_1 dv\, dx + \dint x E_2
 Bdx\right) \geq \diint fv_1 \hat{v}_1 dv\, dx > 0
 $$

 \noindent and $f$ cannot be a steady solution.

 Next consider a steady solution in the neutral case.  Note that
 from (\ref{E1.1}) we have $\partial_x E_2 = 0$ so $E_2 =0$ for
 all $x$ follows.  Next note that

 $$
\begin{array}{rl}
& \dfrac{d}{dx} \left( \dint \underset{\alpha}{\dsum}
f^{\alpha}v_1
\hat{v}^{\alpha}_1 dv - \dfrac{1}{2} E^2_1 +\dfrac{1}{2}B^2\right)\\
\\
= & \dint v_1 \underset{\alpha}{\dsum} \hat{v}^{\alpha}_1
\partial_x f^{\alpha} dv - \rho E_1 - j_2B\\
\\
= & - \dint v_1 \underset{\alpha}{\dsum} e^{\alpha}\left[
\left(E_1+\hat{v}^{\alpha}_2 B\right) \partial_{v_1}f^{\alpha} +
\left(E_2 - \hat{v}^{\alpha}_1 B\right)
\partial_{v_2}f^{\alpha}\right] dv\\
\\
& - \rho E_1 - j_2B\\
\\
= & \dint \underset{\alpha}{\dsum} e^{\alpha}f^{\alpha}\left(E_1+
\hat{v}^{\alpha}_2 B\right) dv - \rho E_1 - j_2 B = 0,
\end{array}
$$

\noindent and hence

\be 2\dint \underset{\alpha}{\dsum} f^{\alpha}v_1
\hat{v}^{\alpha}_1 dv = E^2_1 - B^2 \label{E6.1} \ee

\noindent for all $x$.  If $E_1(x) = 0$ for some $x$ then, since $f^{\alpha}\geq
0$,

\be \dint \underset{\alpha}{\dsum} f^{\alpha} v_1
\hat{v}^{\alpha}_1 dv = 0 \label{E6.2} \ee

\noindent follows and then $B(x)=0$ and $f^{\alpha}(x,v)=0$ for
all $v$.  Suppose $E_1(x_0) \neq 0$ for some $x_0$.  A
contradiction will be derived from this and the proof will be
complete.

Choose $a < x_0$ and $b > x_0$ such that

$$
E_1(x) \neq 0\ {\rm on}\ (a,b)
$$

\noindent and

$$
E_1(a) = E_1(b) = 0.$$

\noindent Consider $E_1(x) >0$ on $(a,b)$.  Choose $d \in (a,b)$ with

$$0< E^{\prime}_1 (d) =\dint \underset{\alpha}{\dsum} e^{\alpha}f^{\alpha}
(d,v) dv. $$

\noindent Choose  $\alpha \in \{1, \ldots , N\}$ and $w \in \BR^2$ such that

$$
f^{\alpha}(d,w) > 0
$$
and $e^{\alpha}>0.$  By continuity we may take $w_1 \neq 0$.  Let $(X(s),V(s)) =
\left(X^{\alpha}(s,0,d,w),V^{\alpha}(s,0,d,w)\right)$.  If $w_1 >
0$ define

$$
T = \sup \left\{ t > 0 : V_1(s) \geq 0\ {\rm and}\ X(s) \leq b\
{\rm for\ all}\ s \in [0,t]\right\}.
$$

\noindent On $[0,T), X(s) \in [a,b]$ so $E_1(X(s)) \geq 0$.  From
(\ref{E6.1}) it follows that

$$
\left|B(X(s))\right| \leq E_1 (X(s))
$$

\noindent and hence that

$$\dot{V}_1(s) = e^{\alpha}(E_1(X(s)) + \hat{V}_2 (s) B(X(s))) \geq 0
$$

\noindent and

$$V_1(s) \geq w_1 > 0.
$$

\noindent It follows that $T$ is finite and that

$$
X(T) = b.
$$

\noindent Hence

$$f^{\alpha}(b,V(T)) = f^{\alpha}(d,w) > 0
$$

\noindent which contradicts (\ref{E6.2}).  If $w_1 < 0$ define

$$
T= \inf \left\{ t < 0: V_1(s)\leq 0\ {\rm and}\ X(s) \leq b\ {\rm
for\ all}\ s \in [t,0]\right\}.
$$

\noindent It may be shown that $T$ is finite and that $X(T) = b$, which
again contradicts (\ref{E6.2}).

A contradiction may be reached in a similar manner if $E_1 < 0$ on
$(a,b)$ so the proof is complete.

\end{document}